\documentclass[11pt,reqno]{amsart}
\usepackage{graphicx,amsmath,amsfonts,latexsym,amssymb,amsthm,color}
\usepackage{latexsym}
\usepackage{verbatim}
\usepackage{enumerate}
\usepackage{enumitem}

\usepackage[a4paper]{geometry}
\definecolor{blau}{rgb}{0.05,0.2,0.7}
\definecolor{auchblau}{rgb}{0.03,0.3,0.7}
\usepackage[pdftex,linkbordercolor={0.8 0.5 0.2}]{hyperref} 

\setlength{\parindent}{0ex}

\renewcommand{\Re}{\operatorname{Re}}
\renewcommand{\Im}{\operatorname{Im}}

\newcommand{\Lsp}{\operatorname{Lsp}}

\newcommand{\Sing}{\operatorname{sing\, supp}}

\DeclareMathOperator{\supp}{supp}
\newcommand{\der}{\mathrm{d}}
\newcommand{\rmi}{\mathrm{i}}
\newcommand{\ee}{\mathrm{e} }
\newcommand{\sphere}{{\mathbb{S}^{d-1}}}
\newcommand{\calO}{\mathcal{O}}

\newcommand{\tr}{\mathrm{Tr}}
\newcommand{\Tr}{\mathrm{Tr}}

\newcommand{\rel}{\mathrm{rel}}	
\newcommand{\dist}{\mathrm{dist}}

\newcommand{\R}{\mathbb{R}}
\newcommand{\C}{\mathbb{C}}

\newcommand{\cf}{\mathcal{N}}

\newcommand{\loc}{\mathrm{loc}}

\newcommand{\dd}{\mathrm{d}}




\setlist{leftmargin=8mm}


\newtheorem{theorem}{Theorem}[section]
\newtheorem{definition}[theorem]{Definition}

\newtheorem{corollary}[theorem]{Corollary}
\newtheorem{proposition}[theorem]{Proposition}

\newtheorem{rem}[theorem]{Remark}

\def\mathbi#1{\textbf{\em #1}}


\title[Trace Singularities and the Poisson relation]{Trace singularities in obstacle scattering and the Poisson relation for the relative trace}

\author[Y. Fang]{Yan-Long Fang}
\address{School of Mathematics,  University of Leeds,  Leeds , Yorkshire, LS2 9JT,
UK} \email{y.l.fang@leeds.ac.uk}

\author[A. Strohmaier]{Alexander Strohmaier}
\address{School of Mathematics,  University of Leeds,  Leeds , Yorkshire, LS2 9JT,
UK} \email{a.strohmaier@leeds.ac.uk}

\thanks{Supported by Leverhulme grant RPG-2017-329}

\begin{document}

\begin{abstract} 
 We consider the case of scattering by several obstacles in $\R^d$, $d \geq 2$ for the Laplace operator $\Delta$ with Dirichlet boundary conditions imposed on the obstacles. In the case of two obstacles, we have the Laplace operators $\Delta_1$ and $\Delta_2$ obtained by imposing Dirichlet boundary conditions only on one of the objects. The relative operator
 $g(\Delta) - g(\Delta_1) - g(\Delta_2) + g(\Delta_0)$ was introduced in \cite{RTF} and shown to be trace-class for a large class of functions $g$, including certain functions of polynomial growth. When $g$ is sufficiently regular at zero and fast decaying at infinity then, by the Birman-Krein formula, this trace can be computed from the relative spectral shift function $\xi_\rel(\lambda) = -\frac{1}{\pi} \Im(\Xi(\lambda))$, where  $\Xi(\lambda)$
 is holomorphic in the upper half-plane and fast decaying.
 In this paper we study the wave-trace contributions to the singularities of the Fourier transform of $\xi_\rel$. In particular we prove that
 $\hat \xi_\rel$ is real-analytic near zero and we relate the decay of $\Xi(\lambda)$ along the imaginary axis to the first wave-trace invariant of the shortest bouncing ball orbit between the obstacles. The function $\Xi(\lambda)$ is important in the physics of quantum fields as it determines the Casimir interactions between the objects.

\medskip

\noindent\textsc{R\'esum\'e.}

Nous considérons pour le laplacien $\Delta$ la diffusion par plusieurs obstacles dans $\mathbb R^d$, $d \ge 2$, munis de la condition aux limites de Dirichlet. Lorsqu'il y a deux obstacles, nous dénotons $\Delta_1$ et $\Delta_2$ les laplaciens obtenus en imposant la condition aux limites de Dirichlet sur un seul des objets. L'opérateur de trace relative $g(\Delta) - g(\Delta_1) - g(\Delta_2) + g(\Delta_0)$ a été introduit dans \cite{RTF} et s'avère être un opérateur à trace pour une grande classe de fonctions $g$, dont certaines fonctions à croissance polynomiale. Lorsque $g$ est suffisamment régulier en zéro et décroît rapidement à l'infini, la formule de Birman--Krein permet de calculer cette trace à partir de la fonction de décalage spectral $\xi_{rel}(\lambda) = - \frac 1 \pi \Im(\Xi(\lambda))$, où $\Xi$ est une fonction holomorphe à décroissance rapide dans le demi-plan supérieur. Dans cet article, nous étudions les contributions de la trace des ondes aux singularités de la transformée de Fourier de $\xi_{rel}$. Nous démontrons entre autres que $\hat \xi_{rel}$ est une fonction analytique réelle près de zéro, et nous relions la décroissance de $\Xi(\lambda)$ le long de l'axe imaginaire au premier invariant de la trace des ondes correspondant aux trajectoires rebondissant entre les deux obstacles. La fonction $\Xi(\lambda)$ est importante en théorie quantique des champs car elle détermine les interactions de Casimir entre les objets.

\end{abstract}

\maketitle


\section{Introduction}
\label{Introduction}

We consider obstacle scattering for the Laplace operator $\Delta = \der^* \der= \nabla^* \nabla$ acting on functions on $d$-dimensional Euclidean space $\R^d$ with $d \geq 2$.

\begin{figure}[h]
	\centering
	\includegraphics[clip, width=4cm]{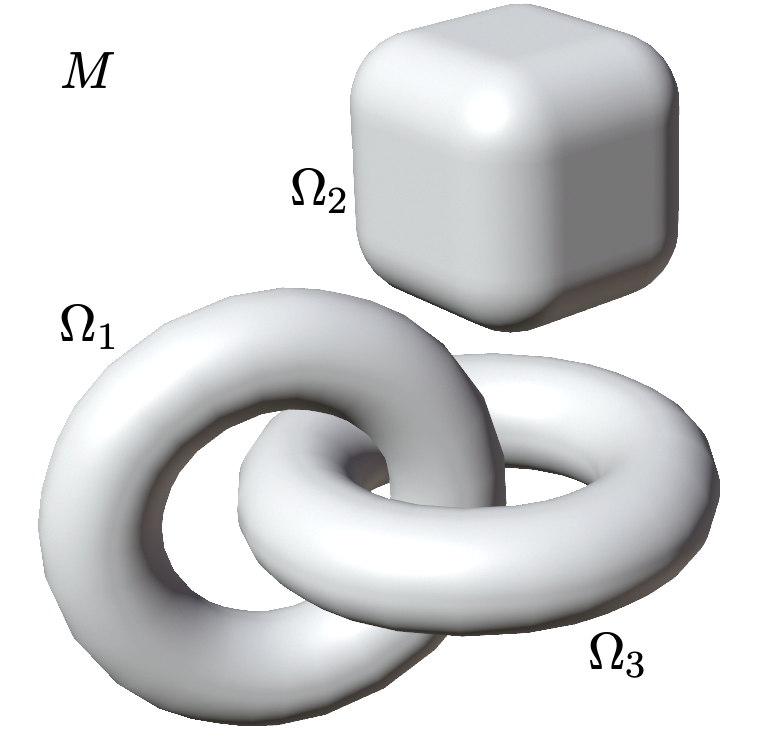}
	\caption{Three obstacles $\Omega_1,\Omega_2,\Omega_3$ in $\R^3$ with complement $M$.}
	\label{threeobstacles}
\end{figure}

Let $\Omega$ be a bounded open subset of $\R^d$ with smooth boundary such that $M = \R^d \setminus \overline{\Omega}$ is connected.
The domain $\Omega$ will be assumed to consist of $N$ many connected components $\Omega_1,\ldots,\Omega_N$. 
The space $X = \R^d \setminus \partial \Omega$ therefore consists of the $N+1$-many connected components
$\Omega_1,\ldots,\Omega_N,M$.  We think of $\Omega$ as obstacles placed in $\R^d$. The open subset $M$ then corresponds to the exterior region of these obstacles. Similarly, we define $M_i = \R^d \setminus \overline{\Omega_i}$ and $X_i=\R^d \setminus \partial \Omega_i$.

\begin{definition}\label{Laplace operators}

The self-adjoint operators $\Delta_0, \Delta_M, \Delta_X, \Delta_{X_i}$ are defined as follows.
\begin{enumerate}
	 \item On the Hilbert space $L^2(\R^d)$ the free Laplacian $\Delta_0$ is defined as the self-adjoint operator $\Delta$ with domain $H^2(\R^d)$.
 \item On the Hilbert space $L^2(M)$ the exterior Laplacian $\Delta_M$ is the self-adjoint operator $\Delta$ with domain $H^2(M) \cap H^1_0(M)$, i.e. the operator constructed from the Dirichlet quadratic form on $C_0^\infty(M)$.
 \item On the Hilbert space $L^2(\Omega)$ the interior Laplacians $\Delta_{\Omega}$ with domain $H^2(\Omega) \cap H^1_0(\Omega)$ is constructed from the Dirichlet quadratic forms on $H^1_0(\Omega)$. This operator splits into a direct sum
 $\Delta_{\Omega} = \Delta_{\Omega_1} \oplus \ldots \oplus  \Delta_{\Omega_N}$ on 
 $L^2(\Omega)= L^2(\Omega_1) \oplus \ldots \oplus L^2(\Omega_N)$.
\item  On the Hilbert space $L^2(\R^d) = L^2(M) \oplus L^2(\Omega)$ is defined as the operator $\Delta_X = \Delta_M \oplus \Delta_\Omega$.
\item  On the Hilbert space $L^2(\R^d)$ the operator $\Delta_{X_i}$ the operator is defined by the Dirichlet quadratic form
on $H^1_0(X_i)$. This operator is the direct sum of $\Delta_{\Omega_i}$ and the exterior Dirichlet Laplacian $\Delta_{M_i}$.
\end{enumerate}
\end{definition}

Spectral and scattering theory describe the spectral resolution of these operators, which we now explain in more detail. A similar description as below is true in the more general black-box formalism in scattering theory as introduced by Sj\"ostrand and Zworski \cite{SZ1991} and follows from the meromorphic continuation of the resolvent and its consequences. The description below follows \cite{OS} and we refer the reader to this article for the details of the spectral decomposition and properties of the scattering matrix.

The operators $\Delta_{\Omega}$ and $\Delta_{\Omega_i}$ have purely discrete spectrum, whereas
$\Delta_M$ has purely absolutely continuous spectrum. The spectral resolution of $\Delta_\Omega$ is described by an orthonormal basis $(\phi_j)$ of smooth eigenfunctions $\phi_j \in C^\infty(\overline{\Omega})$ with eigenvalues
$\lambda_j^2$, where we assume
$$
 0<\lambda_1 \leq \ldots \leq \lambda_n \leq \lambda_{n+1} \leq \ldots
$$
The eigenvalue counting function $\cf_{\Omega}$ is defined by $\cf_\Omega(\lambda) = \#\{ \lambda_j < \lambda\}$ and satisfies a Weyl-law $$\cf_\Omega(\lambda) \sim C_d |\Omega| \lambda^{d}$$ for $\lambda \to +\infty$, where $C_d=(2\pi)^{-d}\omega_d$ and $\omega_d$ is the Euclidean volume of the unit ball in $\R^d$. 
The continuous spectral resolution of $\Delta_M$ is described by generalised eigenfunctions $E_\lambda(\Phi) \in C^\infty(\overline M)$ indexed by $\lambda>0$ and $\Phi \in C^\infty(\sphere)$. The are uniquely determined by the following properties 
\begin{enumerate}
\item $(\Delta - \lambda^2 ) E_\lambda(\Phi) =0$,
\item $E_\lambda(\Phi)|_{\partial M}=0$
\item The asymptotic expansion
$$
 E_\lambda(\Phi) = \frac{\ee^{-\rmi \lambda r} \ee^{\frac{i\pi(d-1)}{4}} }{r^{\frac{d-1}{2}}} \Phi + \frac{\ee^{\rmi \lambda r} \ee^{-\frac{i\pi(d-1)}{4}} }{r^{\frac{d-1}{2}}} \Psi_\lambda + O\left(\frac{1}{r^{\frac{d+1}{2}}}\right),  \quad \textrm{for} \,\,\,r \to \infty
$$
holds for some $\Psi_\lambda \in C^\infty(\sphere)$.
\end{enumerate}
As a result $\Psi_\lambda$ is uniquely determined and implicitly defines a linear mapping 
\begin{gather*}
 \mathbi{S}_\lambda:  C^\infty(\sphere) \to C^\infty(\sphere), \quad \Phi \mapsto \tau \Psi_\lambda,
\end{gather*}
where $\tau: C^\infty(\sphere) \to C^\infty(\sphere)$ is the pull-back of the antipodal map. The map $\mathbi{S}_\lambda : C^\infty(\sphere) \to C^\infty(\sphere)$ is called the scattering matrix,
and $ \mathbi{A}_\lambda= \mathbi{S}_\lambda - \mathrm{id}$ is called the scattering amplitude. The scattering matrix extends to a unitary operator 
$\mathbi{S}_\lambda : L^2(\sphere) \to L^2(\sphere)$ for $\lambda >0$, and 
has the following properties depending on whether the dimension is even or odd.
\begin{itemize}
 \item In case $d$ is odd the scattering matrix $\mathbi{S}_\lambda$ extends to a meromorphic function on $\C$ which is regular on the real line.
  It satisfies the functional relation $\mathbi{S}_\lambda \tau \mathbi{S}_{-\lambda} = \tau$ and is unitary in the sense that $(\mathbi{S}_\lambda)^{-1} = \mathbi{S}_{\overline{\lambda}}^*$.
 \item In case $d$ is even the scattering matrix $\mathbi{S}_\lambda$ extends to a meromorphic function on the logarithmic cover of $\C \setminus \{0\}$. It is holomorphic in the upper half-plane and regular on $\R \setminus \{0\}$. We have a functional relation 
 $\mathbi{S}_\lambda \, \tau \, (2 \, \mathrm{id} - \mathbi{S}_{-\lambda}) = \tau$, where $-\lambda$ is interpreted as $e^{\rmi \pi} \lambda$. Unitarity holds in the sense that
 $(\mathbi{S}_\lambda)^{-1} = \mathbi{S}_{\overline{\lambda}}^*$.
\end{itemize}

It can be shown that $\mathbi{A}_\lambda$ extends to a continuous family of trace-class operators on the real line and one has the following estimate on 
the trace norm 
\begin{equation*}
\| \mathbi{A}_\lambda \|_1 = \left \{  \begin{matrix} O(\lambda^{d-2}) & \textrm{ for } d\geq 3, \\ O(\frac{1}{-\log(\lambda)}) & \textrm{ for } d=2 \end{matrix} \right.
\end{equation*}
for all $| \lambda |<\frac{1}{2}$ in a fixed sector in the logarithmic cover of the complex plane, c.f. \cite[Theorem 1.11]{OS} or  \cite[Lemma 2.5]{christiansen1999weyl} in case $d \geq 3$.

If $f \in \mathcal{S}(\R)$ is a Schwartz function with $f(\lambda) = f(-\lambda)$ we have that 
$f(\Delta_X^{\frac{1}{2}}) -  f(\Delta_0^{\frac{1}{2}})$ is a trace-class operator with trace equal to
$$
 \Tr\left( f(\Delta_X^{\frac{1}{2}}) -  f(\Delta_0^{\frac{1}{2}})\right) = -\int_0^\infty \xi(\lambda) f'(\lambda) \der \lambda.
$$
for a function $\xi \in L^1_{\loc}(\R)$ with $\xi(\lambda)=0$ if $\lambda<0$. The function is uniquely determined and called the spectral shift function. The Birman-Krein formula \cite{BirmanKrein} states that for $\lambda>0$ one has
\begin{equation*}
\label{ssfunction}
\xi(\lambda) =  \xi_{ac}(\lambda) + \xi_p(\lambda)  = \frac{1}{2\pi \rmi} \log \det(\mathbi{S}_\lambda) + N_\Omega(\lambda).
\end{equation*}
The relative trace and the relative trace formula were introduced in \cite{RTF}. 
Suppose that $h$ is a polynomially bounded function and $\Delta_{X_j}$ as defined in Definition \ref{Laplace operators}. Then each of the operators $h(\Delta_X)$, $h(\Delta_{X_j})$,
and $h(\Delta_0)$ has $C^\infty_0(X)$ contained in its domain. We define
\begin{equation}
\label{RelR}
\mathcal{R}_h = h(\Delta_X^{\frac{1}{2}}) - h(\Delta_0^{\frac{1}{2}}) - \sum_{j=1}^N \left( h(\Delta_{X_j}^{\frac{1}{2}}) - h(\Delta_0^{\frac{1}{2}}) \right).
\end{equation}
This operator has dense domain, containing $C^\infty_0(X)$. Whenever this operator is bounded (which is the only case we consider in this paper), we will denote its closure also by $\mathcal{R}_h$.
If $h \in \mathcal{S}(\R)$ is even then $\mathcal{R}_h$ is a trace-class operator and
$$
 \Tr (\mathcal{R}_h) = - \int_0^\infty \xi_\rel(\lambda) h'(\lambda) \der \lambda,
$$
where the relative spectral shift-function $\xi_\rel$ is for $\lambda>0$ given by
$$
 \xi_\rel(\lambda)= \xi(\lambda) - \sum_{j=1}^N \xi_j(\lambda) =\frac{1}{2\pi \rmi} \log \frac{\det(\mathbi{S}_\lambda)}{\det(\mathbi{S}_{1,\lambda}) \cdots \det(\mathbi{S}_{N,\lambda})}.
$$
Here $\det(\mathbi{S}_{j,\lambda})$ correspond to the scattering matrices when only obstacle $\Omega_j$ is present, and the other obstacles are removed. Note that the interior contributions cancel out.
The main result of \cite{RTF} is that $\mathcal{R}_h$ is trace-class for a much larger class of functions including $h(\lambda)=\lambda^s$ for $\Re(s) >0$. We briefly recall the result for a slightly less general class of functions that are sufficient for the purposes of this article.
Define
$$
\mathfrak{S}_\theta=\{z \in \C \; | \; z \ne 0, |\arg (z)|< \theta \}
$$
for some $0<\theta\le \pi$ and let $\mathcal{P}_\theta$ be the set of functions that are polynomially bounded, holomorphic in $\mathfrak{S}_\theta$, and satisfy
the estimate $| g(z)| = O(|z|^a)$ if $|z|<1$ for some $a>0$.
If $f$ is a function such that $f(\lambda) = g(\lambda^2)$ for some $g$ in $\mathcal{P}_\theta$ then $\mathcal{R}_f$ is trace-class and its trace can be computed by
$$
 \Tr (\mathcal{R}_f) = \frac{\rmi}{2 \pi} \int_\Gamma f'(\lambda) \Xi(\lambda) \der \lambda,
$$ 
for some universal function $\Xi$ that is independent of $f$.
Here $\Gamma$ is the path in the complex plane consisting of the rays $(-\infty,0] \to \C, t \mapsto -t e^{\rmi \theta/2}$
and $[0,\infty) \to \C, t \mapsto t e^{\rmi (\pi-\frac{\theta}{2})}$.
The function $\Xi$ is holomorphic in the upper half plane and satisfies on
$\mathfrak{D}_\epsilon=\{ \lambda \in \C \mid \Im(\lambda) > \epsilon |\lambda| \}$ the following bounds
 \begin{align}
 \label{roughbounds}
  |  \Xi(\lambda)| &\leq C_{\delta',\epsilon}e^{- \delta' \Im{\lambda}}, &
  |  \Xi'(\lambda) | &\leq C_{\delta',\epsilon}e^{- \delta' \Im{\lambda}}
 \end{align}
 for any $\delta'$ with $0 < \delta' < \delta$ and $\epsilon>0$. In particular $\Xi$ is bounded in each sector near zero and is exponentially decaying at imaginary infinity. 
 Here 
 $$
 \delta = \min_{j \not= k} \mathrm{dist}(\Omega_j,\Omega_k)
 $$ 
 denotes the minimum of the distances between distinct objects.
 The function $\Xi$ has a continuous boundary value on $\R$ and
 if $\lambda>0$ we have 
 $$
   \frac{1}{\pi} \Im \Xi(\lambda) =  -\frac{\rmi}{2\pi} \left( \Xi(\lambda) - \Xi(-\lambda)\right) = -\xi_{\rel}(\lambda).
 $$
Moreover, $\Xi(\lambda)$ can be expressed in terms of boundary layer operators as $\Xi(\lambda) = \log \det \left( Q_\lambda \tilde Q_\lambda^{-1} \right)$
(see \cite[Theorem 1.7]{RTF}). Here  $Q_\lambda$ is the single layer operator for the Helmholtz equation on $\partial \Omega$ and $\tilde Q_\lambda$ is the direct sum of the single layer operators on the components $\partial \Omega_j$. This makes the function accessible both to numerical computation and to explicit estimates.

In this paper we link the asymptotic exponential decay rate of $\Xi$ with wave-trace invariants in the singularity expansion of the Fourier transform $\hat \xi_{\rel}$ of the relative spectral shift function $\xi_{\rel}$. 

This is achieved by establishing a relationship between $\hat \xi_{\rel}$ and $\Xi$. Let $\theta = \chi_{[0,\infty)}$ be the Heaviside step function.
Then, $\Im(\hat \xi_{\rel})$ vanishes near zero and $-4\pi \theta \cdot \Im(\hat \xi_{\rel})$ has a well defined Fourier-Laplace transform. We show in Section \ref{Section:4}, that this Fourier-Laplace transform equals
$\Xi(\lambda)$. This allows to translate properties of the wave-trace, a well studied object, to results for the function $\Xi(\lambda)$. 
In particular the singularity of the wave-trace at $t=2\delta$ determines the decay of $\Xi(\lambda)$ at imaginary infinity. 

To demonstrate this we focus on the simplest case, when the obstacles are strictly convex near the points that have distance $\delta$ to other boundary components.
In this case there exist only finitely many isolated non-degenerate bouncing ball orbits of length $2\delta$ between the obstacles and the corresponding leading wave trace-invariant at $t=2\delta$ can be computed explicitly. This leads to 
the following asymptotic behaviour for $\Xi$ valid uniformly in any sector of the form
$\{ \lambda \in \mathbb{C} \mid \Im(\lambda) > \alpha| \Re(\lambda) | \}, \alpha>0$, namely
$$
 \Xi(\lambda) =- \sum_{j} \frac{1}{|\det(I - P_{\gamma_j})|^{\frac{1}{2}}} e^{2 \rmi \delta \lambda} + o(e^{- 2 \delta \Im{\lambda}}),
$$
where the sum is over  bouncing ball modes of length $2 \delta$ and $P_{\gamma_j}$ is the associated Poincar\'e map. The precise formulation is in Theorem \ref{smoothzerothm} and \ref{CapitalXi thm}. This improves the a priori bounds \eqref{roughbounds} from \cite{RTF} and allows for a geometric interpretation. Without convexity assumptions one has the bound
$$
 \Xi(\lambda) =  O(e^{- 2 \delta' \Im{\lambda}}).
$$
for any $0< \delta'< \delta$ as a consequence of our finite propagation speed estimates uniformly in any sector of the form above.

\subsection{Casimir effect} The quantity $\frac{1}{2\pi}\int_0^\infty \Xi( \rmi \lambda) \der \lambda$ can be interpreted as the Casimir energy between the objects. This can formally be justified by considering the relative trace of the operator as done in \cite{RTF} and quantum field theory considerations. In \cite{FS2021} we provided a full mathematical proof that the Casimir force, as computed from the quantum mechanical stress energy tensor is the same as the variation of the above energy. It also was shown to have the same variation as Zeta regularised quantities. The extension to differential forms will be given in a forthcoming paper \cite{FS2021ef}, which is related to \cite{AS2021,SW2021}. Formal considerations in theoretical physics have been used to justify expansions of the type above. We refer here to  \cite{emig2008casimir,KennethKlich} and in particular \cite{wirzba08} where the relation to scattering theory is claimed. Our results are a further step to a full mathematical justification and show to what extent formal derivations hold and how they need to be interpreted. In particular the relation to mathematical scattering theory is expected to provide further insights.

\subsection{Wave-trace invariants}

For non-compact cases, the wave-trace invariants determine the asymptotic behaviour of the function $\Xi$ in the upper half plane. Conversely the asymptotic behaviour of $\Xi$ can probably be used to compute wave-trace invariants of bouncing ball orbits. The reason is that
$\Xi$ is the determinant of the operator which is expressed entirely in terms of boundary layer operators (\cite{RTF}). Boundary layer operators were used in two dimensions in Zelditch's treatment of the inverse problem for $\mathbb{Z}_2$-symmetric domains \cite{Zelditch2004, Zelditch2009}. A statement for higher dimensions can be found in the work of Hezari and Zelditch \cite{HezariZelditch2010}. The function $\Xi$ may be useful in this context.

\subsection{Sign conventions and notations}

\subsubsection{Function spaces and Fourier transform}

The Fourier transform $\hat f$ of $f \in L^1(\R^d)$ will be defined by $$\hat f(\xi) = \int_{\R^d} f(x) \ee^{-\rmi \, x \cdot \xi} \dd x$$ where $x \cdot \xi$ is the Euclidean inner product on $\R^n$.
We work with the field of complex numbers unless otherwise stated: For example $C^\infty(M)$ denotes the space of complex valued smooth functions on $M$.
Similarly, $C^\infty_0(M)$ denotes the set of complex valued smooth compactly supported functions.

\subsubsection{Convexity and concavity of obstacles}
\label{Convexity and concavity of obstacles}
In this paper, we use the convention of \cite{AnderssonMelrose,Melrose1975} to define the convexity or concavity for a Riemannian manifold $(M,g)$ with boundary $\partial M$.
Let $f: M \to \R$ be a boundary defining function, i.e.
$f=0, \der f \not=0$ on $\partial M$ and  $f >0$ on $M \setminus \partial M$. 
We pull back $f$ to a function on $T^*M$ which we denote by the same letter.
Then $M$ is called strictly (locally geodesically) convex, if for every $(x,\xi) \in T^*M$ with $x \in \partial M$ we have the conclusion
$$
\left( H_g f \right) (x,\xi)=0 \implies \left( H_g H_g f \right)(x,\xi) <0,
$$
where $H_g$ is the generator of the geodesic flow on $T^*M$, i.e. the Hamiltonian flow of the function $\frac{1}{2} g^{-1}(\xi,\xi)$ on $T^*M$.
Similarly, M is called strictly (locally geodesically) concave, if for every $x\in \partial M$
$$
\left( H_g f \right) (x,\xi)=0 \implies \left( H_g H_g f \right)(x,\xi) >0,
$$
These definitions can of course be localised, so it makes sense to say that $M$ is strictly convex/concave locally near a point $x \in \partial M$. If the musical isomorphisms are used to identify $T^*M$ and $TM$ then the vector field $H_g$ gets identified with the geodesic spray. If $g$ is Euclidean, then $H_g f=\nabla f$ is the gradient of $f$ and $H_g H_g f=\operatorname{Hess}(f)$ is the Hessian of $f$. Therefore, the above definitions of convexity and concavity are the standard definitions if $(M,g)$ is Euclidean.

\section{Singularity trace expansion for convex obstacles}

Since the operator $(\Delta_X+1)^{-k} - (\Delta_0 + 1)^{-k}$ is a trace-class operator for all $k>(d-1)/2$ (see \cite{carron1999determinant}), the classical Lifshits-Krein spectral shift function of the pair $(A,B)$ with  $A=(\Delta_X+1)^{-k}$ and $B= (\Delta_0 + 1)^{-k}$ is the unique $L^1$-function $\xi_{AB}(\lambda) \in L^1(\R)$ such that
$$
 \tr(g(A)-g(B)) = -\int_0^\infty \xi_{AB}(\lambda) g'(\lambda) \der \lambda,
$$
for all $g \in C^\infty_0(\R)$. The above identity is known to hold for $g$ in the Besov space $B^{1}_{\infty,1}(\R)$, but is certainly true for $L^1$-functions whose derivative has $L^1$-Fourier transform. In fact, the most general class of admissible functions for $\xi_{AB}$ is the space of operator Lipschitz functions on $\R$ \cite{Peller,Peller1}.
Changing variables this shows that $f(\Delta_X^\frac{1}{2})-f(\Delta_0^\frac{1}{2})$ is trace-class with trace equal to
$$
 \tr( f(\Delta_X^\frac{1}{2})-f(\Delta_0^\frac{1}{2}) ) = -\int_0^\infty \xi(\lambda) f'(\lambda) \der \lambda,
$$
for all $f \in C^1([0,\infty))$ that satisfy 
$$
 f(\lambda) = O(\frac{1}{(1+\lambda^2)^{k}}),\quad f'(\lambda) = O(\frac{\lambda}{(1+\lambda^2)^{k+1}}),
$$ 
for some $k>(d-1)/2$. This is not the most general class of functions for which the above holds, but it will be sufficient for our purposes.
From the above change of variables one obtains $\xi \in L^1(\R,|\lambda|(1+\lambda^2)^{-k-1} \der \lambda)$. 
By the bounds on $\| \mathbi{A}_\lambda\|_1$ one in fact has  $\xi \in L^1(\R,(1+\lambda^2)^{-k-1} \der \lambda)$ and $\xi$ is a piecewise continuous function vanishing at zero.

This implies that $\hat \xi$ is a tempered distribution.
Since $\xi$ is supported in $[0,\infty)$ it is completely determined by its odd part
$\xi_\mathrm{o}(\lambda) = \frac{1}{2} (\xi(\lambda) -\xi(-\lambda) )$. Then 
$$
\hat \xi_\mathrm{o}(t)=\frac{1}{2}\left(\int_0^\infty\xi(\lambda)e^{-\rmi t\lambda}\dd \lambda+\int_{-\infty}^0 -\xi(-\lambda)e^{-\rmi t\lambda}\dd \lambda\right)=- \rmi \int_0^\infty \xi(\lambda) \sin(t \lambda) \dd \lambda .
$$
The distributional wave trace $w \in \mathcal{S}'(\R)$ is formally defined by
$$
w(t):=\tr \left(\cos(t \Delta_X^\frac{1}{2}) - \cos(t \Delta_0^\frac{1}{2})\right).
$$
This formal expression needs to be interpreted in the sense of distributions, i.e. for each test function $\phi \in \mathcal{S}(\R)$ one has 
$$
 \hat \phi_e(\Delta_X^\frac{1}{2})-\hat \phi_e(\Delta_0^\frac{1}{2}) =  \int_\R \phi(t) \left( \cos(t \Delta_X^\frac{1}{2}) - \cos(t \Delta_0^\frac{1}{2})\right) \der t 
$$
is a trace-class operator, and its trace is given by the pairing $(w,\phi)$. Here $\phi_e(t) = \frac{1}{2}(\phi(t) + \phi(-t))$ is the even part of $\phi$ and therefore $\hat \phi_e$ is the cosine transform of $\phi$.
By the Birman-Krein formula we have that $(w, \phi)$ is equal to the pairing of
$ \rmi t \hat \xi_\mathrm{o}$ with $\phi$. As an immediate consequence of the definition of the spectral shift function one obtains
$$
 w(t)=\tr \left( \cos(t \Delta_X^\frac{1}{2}) - \cos(t \Delta_0^\frac{1}{2})\right)= \rmi t \hat \xi_\mathrm{o}(t)= - t \Im \hat \xi(t).
$$

Now we would like to separate the absolutely continuous part $\xi_{ac}$ from the piecewise constant part $\xi_{p}$ of the spectral shift function. Let $\Pi_M:L^2(\R^d)\to L^2(M)$ be the orthogonal projection. Then $ \hat \phi_e(\Delta_X^\frac{1}{2})$ commutes with $\Pi_M$, but $ \hat \phi_e(\Delta_0^\frac{1}{2})$ does not. However, we still have the decomposition
$$
 w = w_{ac} + w_p,
$$
\begin{equation}
\label{wac eqn}
w_{ac}(t) = \tr \left( \Pi_M\left(   \cos(t \Delta_X^\frac{1}{2}) - \cos(t \Delta_0^\frac{1}{2}) \right) \Pi_M\right) - | \Omega | \gamma_{d}(t),
\end{equation}
and
\begin{equation}
\label{wp eqn} 
 w_p(t) = \tr \left((1 - \Pi_M) \cos(t \Delta_X^\frac{1}{2}) (1 - \Pi_M)\right). 
\end{equation}
Here 
$$
| \Omega | \gamma_{d}(t) = \tr \left( (1 - \Pi_M) \left(  \cos(t \Delta_0^\frac{1}{2})  \right)(1 - \Pi_M) \right)
$$
and $\gamma_{d}$ is a homogeneous distribution given by 
\begin{equation*}
\gamma_{d}(t)=\left\{
\begin{aligned}
&\frac{\pi^{\frac{1-d}{2}}\underline{t}^{-d}}{\Gamma(\frac{1-d}{2})}& \text{for even $d$} \\
&(-\pi)^{\frac{1-d}{2}}\frac{(\frac{d-1}{2})!}{(d-1)!}\boldsymbol{\delta}^{d-1}(t)& \text{for odd $d$}
\end{aligned}
\right.
\end{equation*}
where $\underline{t}^{-d}$ is the homogeneous distribution defined in \cite[Section 3.2]{Hormander} and $\boldsymbol{\delta}^{d-1}$ is the $d-1$-th distributional derivative of the delta distribution. Note that $\gamma_{d}(t)$ is the cosine Fourier transform of $(d C_d)\lambda_+^{d-1}$ \cite{Safarov}.

One can also consider the Cauchy evolution operator $U_X(t)$ which acts on the Hilbert space $L^2(X) \oplus L^2(X)$ and is given by
$$
 U_X(t) = \left( \begin{matrix}  \cos(t \Delta_X^\frac{1}{2}) & \Delta_X^{-\frac{1}{2}} \sin(t \Delta_X^\frac{1}{2}) \\  -\Delta_X^{\frac{1}{2}} \sin(t \Delta_X^\frac{1}{2}) &  \cos(t \Delta_X^\frac{1}{2}) \end{matrix}\right),
$$
where $\Delta_X^{-\frac{1}{2}} \sin(t \Delta_X^\frac{1}{2})$ is defined by functional calculus with respect to the function 
$g(x)=x^{-\frac{1}{2}} \sin(x^\frac{1}{2} t)$, which is entire in $x$. This operator has finite propagation speed in the sense that its distributional kernel is supported away from the set
$\{ (x,y) \mid \mathrm{dist}(x,y) > t \}$. We similarly define $U_{X_j}$ and $U_0$. The corresponding distribution trace
$$
 u(t) = \tr(U_X(t) - U_0(t))
$$
then equals $2 w(t)$. 
\begin{rem}
It is sometimes more natural to consider the operator $U_X(t)$ as an operator in $H^1(X) \oplus L^2(X)$ and thus define its distributional trace in that space. Since for any smooth compactly supported test function $\phi \in C^\infty_0(\R)$
the operator 
$
 \int_\R U_X(t) \phi(t) \der t
$
has smooth compactly supported integral kernel, its trace in any Sobolev space equals the integral over the diagonal. Thus, the distributional traces one obtains are independent of the choice of Sobolev space in the definition of the trace.
\end{rem}

The properties of $u$ (and hence $w$) have been subject to extensive investigation in various settings. This started with the work of Lax-Phillips \cite{LaxPhillips,LaxPhillips1978} in odd dimensions. Their method is also known as the Lax-Phillips semigroup construction \cite[Chapter 9]{Taylorbook2}. The trace of the Lax-Phillips semigroup can be expressed in terms of the scattering operator. By Lidskii's theorem, the trace can also be written as a sum of the Fourier transform of the test function over the scattering frequencies (also known as resonances or scattering poles). It turns out that the trace of the Lax-Phillips semigroup is equal to the one of $u(t)$ \cite[Chapter 9]{Taylorbook2}. The trace-class property of $u$ on $(0,\infty)$ was shown by Bardos-Guillot-Ralston in \cite{MR668585} using the Birman-Krein formula (see also Melrose in \cite{Melrose1982} for potential scattering problems). By applying Ivrii's work on the second Weyl coefficient \cite{Ivrii1980}, Melrose extended the trace formula to include $t=0$ for compact obstacles in \cite{Melrose1988}. There is a rich mathematical literature on estimates of the number of scattering poles in various settings based on trace-formulae, for instance, \cite{Melrose1983,Melrose1988,Petkov,SZ1991,SZ1993,SZ1994}.

The relation to geometry is facilitated by
expressing the singular part of the wave-trace  in terms of a sum of Lagrangian distributions with supports at the lengths of periodic trajectories. This is sometimes referred to as the Poisson summation formula for the wave trace. Thanks to the work of Chazarain \cite{Chazarain}, Colin de Verdi\`{e}re \cite{Colin1,Colin2}, Duistermaat \cite{DuistermaatGuillemin}, Guillemin-Melrose \cite{GuilleminMelrose}, and Andersson-Melrose  \cite{AnderssonMelrose}, the Poisson summation formula for the wave trace was derived for compact manifolds with or without boundary. We also refer to \cite{SVbook} for a very detailed treatment containing the case of manifolds with boundary.
The standard assumptions  for these results is strict geodesic concavity or convexity of the boundary. In the non-compact situation of obstacle scattering of finitely many strictly convex compact objects the Poisson summation formula is due to Bardos, Guillot, and Ralston \cite{MR668585}. We also refer to 
Petkov and Popov \cite{PetkovPopov} for an overview and further results for non-trapping boundaries.

\begin{theorem}
	\label{length spectrum thm}
 Suppose $\Omega_j$ is strictly convex for $1\le j \le N$. Then the singular support of the distribution $w_{ac}$ is contained in the set 
 $$\{0\} \cup \{ t\in \R\, \big|\, \text{$|t|$ is the length of a periodic trajectory (closed broken geodesic) in $M$} \}.$$
\end{theorem}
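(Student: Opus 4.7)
The plan is to reduce, away from $t=0$, to the classical Poisson relation for the wave trace of a strictly convex obstacle. Since $\gamma_d$ is supported at the origin, for $t\ne 0$ the distribution $w_{ac}$ coincides with
$$T(t):=\tr\bigl(\Pi_M(\cos(t\Delta_X^\frac{1}{2})-\cos(t\Delta_0^\frac{1}{2}))\Pi_M\bigr),$$
so it suffices to prove: if $t_0\ne 0$ is not the length of a closed broken geodesic in $M$, then $T$ is smooth in a neighbourhood of $t_0$. Contributions at $t=0$ from the subtracted Weyl term $|\Omega|\gamma_d$ are already permitted by the statement.

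The first step is a finite propagation speed localization. Fix $T>|t_0|$ and choose $\chi\in C^\infty_0(\R^d)$ equal to $1$ on a ball $B_R$ containing the $T$-neighbourhood of $\overline{\Omega}$. Finite propagation speed for both $\Delta_X$ and $\Delta_0$ implies that outside $B_R\times B_R$ the Schwartz kernels of $\cos(t\Delta_X^\frac{1}{2})$ and $\cos(t\Delta_0^\frac{1}{2})$ coincide for $|t|<T$, because no wave launched outside $B_R$ reaches $\partial\Omega$ within time $T$. Consequently, for any $\phi\in C^\infty_0((-T,T))$,
$$\int\phi(t)\,T(t)\,\dd t \;=\; \tr\bigl(\chi\Pi_M(\hat\phi_e(\Delta_X^\frac{1}{2})-\hat\phi_e(\Delta_0^\frac{1}{2}))\Pi_M\chi\bigr),$$
and the operator on the right-hand side has a compactly supported Schwartz kernel in $M\times M$. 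Thus the analysis of the wavefront set of $T$ at $t_0$ is reduced to the diagonal restriction of the kernel of $\cos(t\Delta_X^\frac{1}{2})-\cos(t\Delta_0^\frac{1}{2})$ on a fixed compact region.

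Next I would invoke the microlocal description of the wave kernel near a strictly convex boundary. The Dirichlet wave kernel $\cos(t\Delta_X^\frac{1}{2})$ is a Fourier integral distribution whose singularities propagate along broken bicharacteristics. When each $\Omega_j$ is strictly convex, no grazing ray is trapped along $\partial\Omega_j$: tangent bicharacteristics leave the boundary transversely into $M$ and escape to infinity. In this setting, Andersson and Melrose \cite{AnderssonMelrose} constructed a clean reflected-wave parametrix, and the Duistermaat-Guillemin-Chazarain trace calculus applies to give the Poisson relation; its adaptation to obstacle scattering was carried out by Bardos, Guillot and Ralston \cite{MR668585}. In the compactly supported situation obtained above, the wavefront set of the diagonal restriction of $\cos(t\Delta_X^\frac{1}{2})$ at $(t_0,x,x)$ can only lie over points $x$ that belong to a closed broken geodesic of period $|t_0|$. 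Subtracting $\cos(t\Delta_0^\frac{1}{2})$ removes the $t=0$ contribution coming from the identity component of the flow, and integration in $x$ via the clean intersection calculus yields smoothness of $T$ at $t_0$ whenever $t_0$ is not the length of a periodic broken geodesic.

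The main obstacle is the boundary microlocal analysis: for a generic smooth obstacle, glancing and gliding rays could produce trace singularities not accounted for by ordinary periodic broken geodesics. Strict convexity of each $\Omega_j$ is used precisely to eliminate this phenomenon, reducing the boundary calculus to the transversally reflecting case. The non-compactness of $M$ enters only through the finite-speed localization step, which on any bounded time interval reduces the distribution $w_{ac}$ to a compact-domain wave-trace problem and thereby allows us to appeal to the classical references.
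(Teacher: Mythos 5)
Your strategy --- finite propagation speed localization followed by an appeal to the microlocal analysis of the wave kernel near a strictly convex obstacle --- is correct and matches the paper's in spirit, but the reduction step is carried out differently. The paper does not merely cut off the kernel: after localizing, it glues a large hemisphere onto $\partial B_R(0)$ to produce a compact manifold $\tilde M$ with boundary $\partial\Omega$ sitting inside a closed manifold $\tilde M_0$ whose length spectrum misses $(0,T)$, and then writes $w_M$ on $(0,T)$ as $\int_{\tilde M} w_{\tilde M}(t,x,x)\,\dd x - \int_{\tilde M_0} w_{\tilde M_0}(t,x,x)\,\dd x + |\Omega|\gamma_d(t)$. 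This makes the appeal to Andersson--Melrose (Theorem 8.9, stated for compact manifolds with boundary) literal rather than by analogy; the price is having to check that the gluing introduces no new closed geodesics of length less than $T$. Your version keeps the noncompact exterior wave kernel and invokes the wavefront-set description of its diagonal restriction directly, which in effect is citing Bardos--Guillot--Ralston's Theorem 5 --- essentially the statement being proved --- so your argument reads more as a reduction to the known noncompact result than as an independent derivation; either route is legitimate for a result the paper itself describes as known. Two small imprecisions to fix: in even dimensions $\gamma_d$ is \emph{not} supported at the origin (only its singular support is $\{0\}$, which is all you need); and tangent bicharacteristics do not ``leave the boundary transversely'' --- the correct statement is that $\partial M$ is diffractive (strictly concave) seen from the exterior, so there are no gliding rays and hence no contribution from closed boundary geodesics, which is precisely how the paper discards the extra term in Andersson--Melrose's minimal length spectrum $\mathcal{L}(M)$.
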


This theorem can be found in somewhat different language in \cite{AnderssonMelrose,GuilleminMelrose,PetkovPopov} and is essentially Theorem 5 in \cite{MR668585}. It was first proved in a manifold setting by Melrose and Andersson \cite{AnderssonMelrose} for compact manifolds. The theorem in the non-compact context can  be reduced to the compact case using finite propagation speed arguments. Since such arguments are important in our approach we sketch it here for the sake of completeness.

\begin{proof}
Since $w_{ac}$ is even it is sufficient to prove this for $w_{ac}$ restricted to an arbitrary interval $(0,T)$. We can therefore assume without loss of generality that all test functions are supported in $(0,T)$.
Denote by  $\tilde w_M(t,x,y)$ and $\tilde w_0(t,x,y)$ the distributional kernels for $\cos(t \Delta_M^\frac{1}{2})$ and $\cos(t \Delta_0^\frac{1}{2})$ respectively.
Let $\phi \in C^\infty_0((0,T))$ be an arbitrary test function. Then the operator
$$
 \int_\R \left(  \cos(t \Delta_M^\frac{1}{2}) - \cos(t \Delta_0^\frac{1}{2}) \right) \phi(t) \der t
$$
has integral kernel that is smooth on $\overline{M} \times \overline{M}$. In fact, $\tilde w_M(t,x,y)$ and $\tilde w_0(t,x,y)$ are distributions on $(0,T)$ taking values in $C^\infty(\overline{M} \times \overline{M})$ (see\cite{AnderssonMelrose},\cite[Section 1]{DuistermaatGuillemin},\cite[Section 3]{DuistermaatHomander} and \cite[Section 4]{PetkovPopov}) and we will use this as a convenient notation. For example the diagonal values 
$\tilde w_M(t,x,x)$ and $\tilde w_0(t,x,x)$ make sense as distributions in the $t$-variable.
In fact, by finite propagation speed its kernel is compactly supported in $\overline{M} \times \overline{M}$.
By Mercer's theorem we have in the sense of distributions
\begin{equation*}
\label{wM eqn}
w_M(t) := \tr \left( \Pi_M\left(  \cos(t \Delta_X^\frac{1}{2}) - \cos(t \Delta_0^\frac{1}{2}) \right) \Pi_M\right) = \int_M w_M(t,x,x) - w_0(t,x,x) \der x.
\end{equation*}
The support of $w_M(t,x,x)-w_0(t,x,x)$ is compact and contained in a ball $B_R(0)$ of radius $R>T>0$. We choose $R$ so large that the distance from the boundary of the ball to $\Omega$ is larger than $T$.
Again using finite propagation speed and the implied support properties of the wave-kernels we can modify $M$ outside this ball without changing the integral. This is done by gluing a large $d$-dimensional hemisphere onto the boundary of $B_R(0)$ in such a way that no additional length spectrum in $(0,T)$ is introduced.
The precise gluing construction can be found in \cite[Section 10]{S1997} (also in \cite{SZ1993}). 
In this way we obtain a compact manifold $\tilde M$ with boundary $\partial \Omega$ and a closed manifold $\tilde M_0$ such that $\tilde M =\tilde M_0 \setminus \Omega$.
Since the closed manifold $\tilde M_0$ was constructed from a large ball in $\R^d$ by gluing a large hemisphere the length spectrum of this manifold does not contain elements in $(0,T)$.
We have as a distribution on $(0,T)$ the equality
\begin{equation*}
w_M(t) = \int_{\tilde M} w_{\tilde M}(t,x,x) \der x -  \int_{\tilde M_0} w_{\tilde M_0}(t,x,x) \der x +  \int_{\Omega} w_{\tilde M_0}(t,x,x) \der x,
\end{equation*}
 where $w_{\tilde M}(t)=\tr (\cos(t \Delta_{\tilde M}^\frac{1}{2}))$ and $w_{\tilde M_0}(t)=\tr (\cos(t \Delta_{\tilde M_0}^\frac{1}{2}))$. The second term has no singularity in $(0,T)$ since $\tilde M_0$ is a closed manifold and the length spectrum does not intersect $(0,T)$, by \cite{DuistermaatGuillemin}. The third term is  $| \Omega | \gamma_{d}(t)$ when restricted to $(0,T)$ and also does not have any singularities.
The singularities of $w_{\tilde M}(t)$ were studied in \cite{AnderssonMelrose}. In particular, Theorem  8.9 in \cite{AnderssonMelrose} implies $\Sing (w_{\tilde M}) \subset \mathcal{L}(M)$, where $\mathcal{L}(M)$ is the minimal length spectrum of $M$ and it is defined as
\begin{multline*}
\mathcal{L}(M)=\{0\} \cup \{ t\in \R\, \big|\, \text{$|t|$ is the length of a periodic trajectory (closed broken geodesic)}
\\
\text{ in $M$ or of a closed boundary geodesic in a strictly convex component.}
\}
\end{multline*}
Since $\Omega_j$ is strictly convex with respect to the interior part, it is strictly concave with respect to $\tilde M$ (the exterior part). Therefore, there are no gliding rays in $\tilde M$ and the theorem follows.
\end{proof}

\section{Trace singularity expansion for the relative spectral shift function}

In this section we assume throughout that number of connected components $N$ is at least two. We will study the singularities of $\hat \xi_{\rel}$ and the relative distributional wave-trace
\begin{equation}
\label{wrel eqn}
 w_\rel(t) = w(t) - \sum_{j=1}^N w_j(t).
\end{equation}
Here $w_j(t) = \tr \left( \cos(t \Delta_{X_j}^\frac{1}{2}) - \cos(t \Delta_0^\frac{1}{2})\right)$ corresponds to $w(t)$ in the configuration where only the $j$-th obstacle $\Omega_j$ is present. 

We start by preparing some observations about finite propagation speed which hold independent of convexity assumptions.

\begin{proposition}
	\label{1obstacledifference}
 Let $\tilde u(t,x,y)$ be the distributional kernel of $U_X-U_0$. Then, for $(t,x,y)$ to be in the support of 
 $\tilde u$ it is a necessary condition that there exists a piecewise linear continuous path $\gamma: [0,L] \to \R^d$ of length 
 $L \leq |t|$ such that $\gamma(0)=y$, $\gamma(L)=x$ and $\gamma(s) \in \partial \Omega$ for some $s \in [0,L]$.
\end{proposition}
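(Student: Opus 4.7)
The plan is to prove the contrapositive. Define $F(x,y) := \inf_{z \in \partial \Omega}(|x - z| + |z - y|)$; by the triangle inequality the infimum is realised by a two-segment piecewise linear path through a nearest boundary point, so the existence of a piecewise linear path from $y$ to $x$ of length $\le |t|$ meeting $\partial \Omega$ is equivalent to $F(x,y) \le |t|$. I fix $(t_0, x_0, y_0)$ with $F(x_0, y_0) > |t_0|$ and show $\tilde u$ vanishes on a neighbourhood; this suffices since $F$ is continuous and the condition is open.

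Using $F \ge |x-y|$, I split into two sub-cases. If $|x_0 - y_0| > |t_0|$, both constituents of $\tilde u$ vanish individually: $U_0$ by the classical finite propagation speed of the free wave group in $\R^d$, and $U_X$ because the intrinsic geodesic distance $\dist_X(x_0, y_0)$ on $X = \R^d \setminus \partial \Omega$ is at least $|x_0 - y_0|$ (or infinite if $x_0, y_0$ lie in different connected components), thus exceeding $|t_0|$. In the remaining sub-case $|x_0 - y_0| \le |t_0|$ with $F(x_0, y_0) > |t_0|$, the strict inequality forces $x_0, y_0 \notin \partial \Omega$ and the segment $[y_0, x_0]$ lies inside a single connected component of $X$; now the kernels $U_X$ and $U_0$ may each be individually nonzero near $(t_0, x_0, y_0)$ and must be shown to coincide.

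For this I test against a smooth bump $\phi$ supported in a small ball around $y_0$ disjoint from $\partial \Omega$ and set $u_\bullet(t) = \cos(t \Delta_\bullet^{1/2}) \phi$ for $\bullet \in \{0, X\}$. The difference $w = u_X - u_0$ solves the wave equation on $X$ with vanishing Cauchy data and Dirichlet boundary data $-u_0|_{\partial \Omega}$. Finite propagation speed for this Dirichlet initial-boundary value problem, invoked via an explicit layer-potential representation or by an energy estimate truncated to the backward characteristic cone, implies that $w(t_0, x_0)$ depends only on the boundary data at points $(s, z) \in [0, t_0] \times \partial \Omega$ with $|z - x_0| \le t_0 - s$. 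In turn, finite propagation speed for the free wave group makes $u_0(s, z)$ vanish unless $|y - z| \le s$ for some $y \in \supp \phi$. Combining these yields $|y - z| + |z - x_0| \le t_0$ for some such $y$ and $z$; shrinking $\supp \phi$ to $\{y_0\}$ contradicts $F(x_0, y_0) > |t_0|$. Running the identical argument for the sine-propagator block $\Delta_X^{-1/2} \sin(t \Delta_X^{1/2})$ of $U_X$ completes the proof.

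The main technical point I expect to need care with is the finite propagation speed for the Dirichlet initial-boundary value problem with nonzero boundary data in an unbounded exterior domain. This is standard for smooth $\partial \Omega$, either through a single/double layer wave potential whose Green's function inherits compactly supported dependence from the free wave kernel, or through an energy estimate truncated to the backward light cone; everything else reduces to classical finite propagation speed for the free wave group combined with triangle-inequality bookkeeping.
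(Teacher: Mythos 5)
Your argument is correct and rests on the same two pillars as the paper's proof: free finite propagation speed localises where the obstacle is first felt, and finite propagation speed for the Dirichlet problem carries the perturbation from $\partial\Omega$ to $x$. The packaging differs. You argue by contrapositive through the function $F(x,y)=\inf_{z\in\partial\Omega}(|x-z|+|z-y|)$ and a domain-of-dependence statement for the exterior initial-boundary value problem with \emph{inhomogeneous} Dirichlet data $-u_0|_{\partial\Omega}$; the paper instead inserts a cutoff $\chi=1-\eta$ with $\eta$ supported in a tubular neighbourhood of $\partial\Omega$, observes that $g=\chi\,U_0 f$ satisfies the homogeneous boundary condition and solves $\Box g=h$ with $h=-[\Box,\eta]U_0 f$ supported near $\partial\Omega$ and inside the forward light cone of $\supp f$, and then applies Duhamel with the retarded Dirichlet propagator $G_{\mathrm{ret}}$. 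The one step you defer --- finite propagation speed for the IBVP with nonzero boundary data --- is precisely what this commutator--Duhamel device supplies: the naive energy estimate on truncated backward cones does not close directly when the Dirichlet trace is nonzero (the boundary flux term no longer vanishes or has a sign), so one must first reduce to homogeneous boundary data by subtracting a localised extension of $u_0$, which is the paper's $\eta\,U_0 f$. If you intend the layer-potential route instead, that also works since the retarded layer kernels inherit the light-cone support of the free fundamental solution, but it is more machinery than needed. Two small bookkeeping points: the kernel $\tilde u$ is the full $2\times 2$ matrix kernel of $U_X-U_0$, so you should run the argument for the $\C^2$-valued Cauchy system (as the paper does with its operator $\mathcal D$) rather than only for the cosine and one sine block; and your equivalence between ``some piecewise linear path of length $\le|t|$ meeting $\partial\Omega$'' and $F(x,y)\le|t|$ is correct and is a clean way to phrase the conclusion, matching the paper's $L=L_1+L_2\le t+4\epsilon$ count after letting $\epsilon\to 0$.
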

\begin{proof}
We first prove the statement for $t\ge 0$. For $g = \left( \begin{matrix} g_1\\ g_2 \end{matrix} \right) \in C^\infty(\R \times X, \C^2)$ we write $\mathcal{D} g := \partial_t g_2 -g_1$. If $f = \left( \begin{matrix} f_1\\ f_2 \end{matrix} \right) \in C^\infty_0(X, \C^2)$ then $g(t,\cdot)=U_X(t) f$ solves the system
$\Box g =0, \mathcal{D} g=0$ with initial conditions $g(0,x)= f(x)$. The function $g$ is the unique solution of this system satisfying the boundary conditions.

Let $A\subset \R^d$ and define $A_r:=\{x\in\R^d: \dist(x,A)\le r \}$. We also set $A_r=\emptyset$ for $r<0$ and $\emptyset_r=\emptyset$ for $r\in \R$. 
We fix $(x,y) \in X \times X$ and
consider a $\C^2$-valued test function $f$ supported in an $\epsilon$-ball $B_\epsilon(y)$. Moreover, let $\eta$ be a test function which is supported in $(\partial \Omega)_\epsilon$, i.e. a small $\epsilon$-tubular neighborhood of $\partial \Omega$ and $\eta=1$ on $(\partial \Omega)_{\frac{\epsilon}{2}}$. Let $\chi = 1- \eta$.
Then $g=\chi  U_0(t)f$ is a $\C^2$-valued solution of the inhomogeneous wave equation
\begin{equation}
\label{inhomo wave eqn}
 \Box g(t) = (\partial_t^2 + \Delta) g(t) =  [\Box, \chi] U_0(t)f = h(t), \quad \mathcal{D} g=0.
\end{equation}

This implies $\mathcal{D} h=0$. 
Since $h(t) =  -[\Box, \eta] U_0(t)f$, one knows that $h(t)$ is supported  in $(\partial \Omega)_\epsilon\cap B_{t+\epsilon}(y)$. This means that the support of $h$ contains points $(t,z)$
only if there is a linear path starting in $B_\epsilon(y)$ ending at $z$ in $(\partial \Omega)_\epsilon\cap B_{t+\epsilon}(y)$ with a length of $L_1 \leq t+2\epsilon$. Let $G_{\mathrm{ret}}$ be the forward propagator obtained from $U_X$, i.e. $G_{\mathrm{ret}}(s)$ is supported at $s\ge 0$ and it is given by
$$
G_{\mathrm{ret}}(s)= \theta(s) \left( \begin{matrix}  \Delta_X^{-\frac{1}{2}} \sin(s \Delta_X^\frac{1}{2}) & 0 \\  0 &  \Delta_X^{-\frac{1}{2}} \sin(s \Delta_X^\frac{1}{2}) \end{matrix}\right).
$$
As before $\theta$ denotes the Heaviside step function.
Since $g(t)$ satisfies the boundary condition for $\Delta_X$ and the inhomogeneous wave equation \eqref{inhomo wave eqn} with $g(0)=f$, we have 
$$
g(t) =  U_X(t) f + \int_{0}^t G_{\mathrm{ret}}(t-t') h(t') \der t'.
$$
For sufficiently small $\epsilon>0$ let $\varphi$ be a smooth cutoff function supported in $B_{\epsilon}(x)$. Then,
$$
\varphi\left(U_X(t)- U_0(t)\right) f = - \int_{0}^t\varphi G_{\mathrm{ret}}(t-t') h(t') \der t'.
$$
As a consequence of energy estimates with boundary conditions, $G_{\mathrm{ret}}(t)$ has the finite propagation speed property in the sense that its distributional kernel is supported in $\{(x,y) \mid \mathrm{dist}(x,y) \leq t\}$. Therefore, $G_{\mathrm{ret}}(t-t') h(t') $ is supported in $((\partial \Omega)_\epsilon\cap B_{t'+\epsilon}(y))_{t-t'}$. In order for the support of $\varphi G_{\mathrm{ret}}(t-t') h(t')$ to be non-empty, we must have $\supp(\varphi) \cap ((\partial \Omega)_\epsilon\cap B_{t'+\epsilon}(y))_{t-t'}\ne \emptyset$, which holds only if $B_\epsilon(x) \cap ((\partial \Omega)_\epsilon)_{t-t'}\ne \emptyset$. This means that there exits $z\in(\partial \Omega)_\epsilon$ such that $B_\epsilon(x) \cap B_{t-t'}(z)\ne \emptyset$. Hence, there must be another linear path of length $L_2$ starting in $z \in(\partial \Omega)_\epsilon$ ending in $B_{\epsilon}(x)\cap B_{t-t'}(z)$. Therefore, we conclude that $L_2\le t-t'+2\epsilon$. Taking into account of the constraint on $L_1$, we have
\begin{equation*}
\left\{
\begin{aligned}
&L_1\leq t'+2\epsilon\\
&L_2\le t-t'+2\epsilon
\end{aligned}
\right.
\implies L=L_1+L_2\leq t+4\epsilon,
\end{equation*}
where $L$ is the total length of the piecewise linear continuous path. If $\gamma: [0,L] \to \R^d$ parameterises the piecewise linear continuous path, we would have $\gamma(0)\in B_\epsilon(y)$, $\gamma(s)\in (\partial \Omega)_\epsilon$ for some $s\in [0,L]$ and $\gamma(L)\in B_\epsilon(x)$. Finally, $\epsilon>0$ can be chosen arbitrarily small and the statement for $t\ge 0$ follows. For the negative time, we consider $v(t,x,y)=\tilde{u}(-t,x,y)$, which corresponds to the kernel of $U_X(-t)-U_0(-t)$ for $t\ge 0$ and the statement of the existence of a piecewise linear continuous path for $v$ with $t\ge 0$ follows the same as the above construction for $\tilde{u}$ with $t\ge 0$. Hence our statement holds for $\tilde{u}$ with $L\le |t|$.
\end{proof}

This means essentially signals starting at $x$ propagate initially with respect to $U_0$ until the wave hits the object, then the effect of the object will be additional reflected waves that also travel at finite speed and need the additional time to reach the point $y$. Essentially the same proof shows the following.
\begin{proposition}
	\label{1obstacledeformation}
	Suppose that $\Omega,\Omega'$ are two different collections of obstacles and let $X$ and $X'$ be the complements of $\partial \Omega$ and $\partial \Omega'$, respectively.
  Then, for $(t,x,y)$ to be in the support of the distributional kernel of $U_{X'}-U_{X}$ it is a necessary condition that there exists a piecewise linear continuous path 
  $\gamma: [0,L] \to \R^d$ of length 
 $L \leq |t|$ such that $\gamma(0)=y$, $\gamma(L)=x$ and $\gamma(s) \in (\partial \Omega \setminus \partial\Omega')\cup (\partial\Omega' \setminus \partial\Omega)$ for some $s \in [0,L]$.
\end{proposition}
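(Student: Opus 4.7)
The plan is to mimic the proof of Proposition~\ref{1obstacledifference} almost verbatim, with the role of the free propagator $U_0$ now played by $U_X$, and the ``compared'' propagator being $U_{X'}$. The sole modification is that the cutoff $\eta$ from that proof should be supported in an $\epsilon$-neighbourhood of the symmetric difference $S = (\partial\Omega \setminus \partial\Omega') \cup (\partial\Omega' \setminus \partial\Omega)$, rather than of $\partial\Omega$, and one sets $\chi = 1 - \eta$ as before. Since the support of a distribution is closed and $S$ is nowhere dense, it is enough to prove the statement for $y$ with $B_\epsilon(y) \subset (X \cap X') \setminus S_\epsilon$ and analogously for $x$.

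For such an $f$ supported in $B_\epsilon(y)$, set $g(t) = \chi \, U_X(t) f$. The first task is to verify that $g(t)$ lies in the operator domain of $\Delta_{X'}$, i.e.\ that it vanishes on $\partial\Omega'$: on $\partial\Omega \cap \partial\Omega'$ this is because $U_X(t)f$ itself vanishes there by the Dirichlet condition on $\partial\Omega$, while on $\partial\Omega' \setminus \partial\Omega \subset S$ the factor $\chi$ vanishes in a neighbourhood. The remaining regularity on each connected component of $X'$ is inherited from $U_X(t) f$, whose only singularities live on $\partial\Omega$: those on $\partial\Omega \cap \partial\Omega'$ are absorbed into the boundary of $X'$, while those on $\partial\Omega \setminus \partial\Omega' \subset S$ are killed by $\chi$. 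Since $\chi \equiv 1$ on $\supp f$ and $U_X(t) f$ satisfies $\mathcal{D}(U_X(t)f) = 0$, the initial data remain $g(0) = f$ and $\mathcal{D} g = 0$.

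Next, put $h(t) := \Box g(t) = [\Box,\chi]\,U_X(t) f$, so that $\mathcal{D} h = 0$. The commutator $[\Box,\chi]$ is supported where $\chi$ is non-constant, and the finite propagation speed of $U_X$ (with Euclidean speed, as in Proposition~\ref{1obstacledifference}) confines $U_X(t) f$ to $B_{t+\epsilon}(y)$; hence $\supp h(t) \subset S_\epsilon \cap B_{t+\epsilon}(y)$. Duhamel's principle for $\Delta_{X'}$ then gives
$$
 g(t) = U_{X'}(t) f + \int_0^t G_{\mathrm{ret}, X'}(t-t') \, h(t') \, \der t',
$$
where $G_{\mathrm{ret}, X'}$ is the forward propagator associated with $\Delta_{X'}$ and again has unit propagation speed.

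Multiplying by a cutoff $\varphi$ supported in $B_\epsilon(x)$ with $x$ separated from $S$, the term $\varphi(\chi - 1) U_X(t) f$ vanishes, so that $\varphi \bigl(U_{X'}(t) - U_X(t)\bigr) f$ reduces to the Duhamel integral tested against $\varphi$. By the finite propagation speed of $G_{\mathrm{ret}, X'}$, this is non-vanishing only if $B_\epsilon(x) \cap (S_\epsilon \cap B_{t'+\epsilon}(y))_{t-t'} \ne \emptyset$ for some $t' \in [0,t]$, producing $z \in S_\epsilon$ with $\dist(y,z) \le t' + \epsilon$ and $\dist(z,x) \le t - t' + \epsilon$; concatenating the two linear segments yields a piecewise linear path from $y$ through $z$ to $x$ of total length $\le t + O(\epsilon)$. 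Sending $\epsilon \to 0$ delivers the required path through $S$ of length $\le |t|$, and the case $t < 0$ follows from the time-reversal observation $v(t,x,y) = \tilde u(-t,x,y)$ used at the end of the previous proof. The one genuinely new technical point is the verification that $\chi\, U_X(t) f$ actually lies in the operator domain of $\Delta_{X'}$ across $\partial\Omega \cap \partial\Omega'$, so that Duhamel's formula is applicable in $L^2(X')$; everything else is a direct transcription of the preceding argument.
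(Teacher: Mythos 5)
Your argument is correct and is exactly the adaptation the paper intends: the paper gives no separate proof of this proposition, saying only that ``essentially the same proof'' as Proposition \ref{1obstacledifference} applies, and your transcription (cutoff supported near the symmetric difference, Duhamel with respect to $\Delta_{X'}$, finite propagation speed of $U_X$ and of $G_{\mathrm{ret},X'}$) is that adaptation. You also correctly isolate the one genuinely new point — checking that $\chi\, U_X(t)f$ satisfies the boundary conditions for $\Delta_{X'}$ and is regular on each component of $X'$ — and resolve it properly.
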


The operator $U_\rel$ has a similar property, but the piecewise linear path in this case has to travel via at least two objects to pick up an effect. This is made precise in the theorem below.

\begin{theorem}
\label{Nobstacledifference}
Let $\tilde u_\rel(t,x,y)$ be the distributional kernel of $$U_\rel=U_X - \left(\sum_j U_{X_j}-(N-1)U_0\right).$$ Then, for $(t,x,y)$ to be in the support of $\tilde u_\rel$ it is a necessary condition that there exists a piecewise linear continuous path $\gamma: [0,L] \to \R^d$ of length $L \leq |t|$ such that $\gamma(0)=x$, $\gamma(L)=y$ and there exist $j_1 \not= j_2$ and $s_1,s_2 \in [0,L]$ so that $\gamma(s_1) \in \partial \Omega_{j_1}$ and $\gamma(s_2) \in \partial \Omega_{j_2}$.
\end{theorem}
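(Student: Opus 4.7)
The plan is to adapt the Duhamel/cutoff argument of Proposition \ref{1obstacledifference}, but with a cutoff that separately resolves the individual obstacles, so that the ``single-obstacle'' contributions to $U_X-U_0$ are matched exactly by the operators $U_{X_j}-U_0$ and cancel, leaving only terms that necessarily involve a path meeting two distinct obstacles. By the same time-reversal symmetry used at the end of the proof of Proposition \ref{1obstacledifference}, it suffices to treat $t\ge 0$.

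Fix $(t_0,x_0,y_0)\in\supp\tilde u_\rel$ with $t_0>0$ and choose $\epsilon>0$ small enough that the tubular neighborhoods $(\partial\Omega_j)_\epsilon$ are pairwise disjoint and do not meet $\{x_0,y_0\}$. Pick cutoffs $\eta_j\in C^\infty_0(\R^d)$ supported in $(\partial\Omega_j)_\epsilon$ with $\eta_j=1$ on $(\partial\Omega_j)_{\epsilon/2}$, and set $\eta=\sum_{j=1}^N\eta_j$, $\chi=1-\eta$. For a $\C^2$-valued test function $f$ supported in $B_\epsilon(y_0)$, the function $g(t)=\chi U_0(t)f$ satisfies the Dirichlet boundary condition for $\Delta_X$ and for each $\Delta_{X_j}$, has $g(0)=f$, and solves $\Box g = h := \sum_{j=1}^N h_j$ with $h_j = -[\Box,\eta_j]U_0(t)f$, together with $\mathcal{D}g=0$. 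The Duhamel computation from Proposition \ref{1obstacledifference} applied in $X$ and in each $X_j$ then yields
\begin{equation*}
(U_X-U_0)(t)f \;=\; -\eta\, U_0(t)f \;-\; \int_0^t G_\mathrm{ret}^{X}(t-t')\,h(t')\,\der t',
\end{equation*}
\begin{equation*}
(U_{X_j}-U_0)(t)f \;=\; -\eta_j\, U_0(t)f \;-\; \int_0^t G_\mathrm{ret}^{X_j}(t-t')\,h_j(t')\,\der t'.
\end{equation*}
Since $\eta=\sum_j\eta_j$, the free cutoff contributions cancel when one forms $U_\rel$, so that
\begin{equation*}
U_\rel(t)f \;=\; \sum_{j=1}^N \int_0^t \bigl[G_\mathrm{ret}^{X_j}(t-t') - G_\mathrm{ret}^{X}(t-t')\bigr]\,h_j(t')\,\der t'.
\end{equation*}

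To each summand I would then apply the analog of Proposition \ref{1obstacledeformation} for the retarded Green functions $G_\mathrm{ret}^{X_j}-G_\mathrm{ret}^X$; the same energy-estimate argument goes through because $G_\mathrm{ret}$ inherits the finite propagation speed of $U$. The symmetric difference of obstacle boundaries for the configurations $X$ and $X_j$ is $\bigcup_{k\ne j}\partial\Omega_k$, so any point in the support of $[G_\mathrm{ret}^{X_j}-G_\mathrm{ret}^X](t-t')h_j(t')$ at $x\in B_\epsilon(x_0)$ forces a piecewise linear path of length at most $t-t'+2\epsilon$ from a point $z\in(\partial\Omega_j)_\epsilon$ to $x$ passing through some $\partial\Omega_k$ with $k\ne j$. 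Since $\supp h_j(t')\subset(\partial\Omega_j)_\epsilon\cap B_{t'+\epsilon}(y_0)$, the point $z$ already lies at distance at most $t'+2\epsilon$ from $B_\epsilon(y_0)$ along a straight segment. Prepending this segment, reversing the parametrization, reducing to a minimal three-segment configuration of the form $x_0\to p_1\to p_2\to y_0$ with $p_i\in(\partial\Omega_{j_i})_\epsilon$, and letting $\epsilon\downarrow0$ using compactness of $\partial\Omega$ produces a piecewise linear path $\gamma\colon[0,L]\to\R^d$ with $\gamma(0)=x_0$, $\gamma(L)=y_0$, $L\le t_0$, meeting both $\partial\Omega_{j_1}$ and $\partial\Omega_{j_2}$ with $j_1\ne j_2$.

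The main technical items are verifying the Green-function analog of Proposition \ref{1obstacledeformation} and executing the $\epsilon\downarrow 0$ compactness step cleanly. The reduction to a three-segment minimizer at each $\epsilon$ relies on the fact that admissible paths live in $\R^d$ rather than in $X$, so they may freely pass through obstacles. The remaining content is an essentially algebraic rearrangement of the single-obstacle Duhamel identity.
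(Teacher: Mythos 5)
Your argument is correct, and it takes a genuinely different route from the paper's. The paper builds a single parametrix $g=\sum_j\chi_jU_{X_j}-(N-1)\chi U_0$ with cutoffs $\eta_j$ supported near the \emph{complementary} boundaries $\calO_j=\partial\Omega\setminus\partial\Omega_j$; the resulting source $h=-\sum_j[\Box,\eta_j](U_{X_j}-U_0)f$ then already encodes the "two distinct obstacles" condition, because $[\Box,\eta_j]$ localises near the other obstacles while Proposition \ref{1obstacledifference} applied to $U_{X_j}-U_0$ forces the path to have touched $\partial\Omega_j$ first; only the plain finite propagation speed of $G^{X}_{\mathrm{ret}}$ is needed afterwards. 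You instead place the cutoffs on the obstacles themselves, run the single-obstacle Duhamel identity separately in $X$ and in each $X_j$, and cancel the common free terms algebraically, arriving at $U_\rel(t)f=\sum_j\int_0^t\bigl[G^{X_j}_{\mathrm{ret}}-G^{X}_{\mathrm{ret}}\bigr](t-t')\,h_j(t')\,\der t'$; the second reflection is then extracted from the \emph{difference of propagators} via Proposition \ref{1obstacledeformation}. This is a dual arrangement of the same ingredients: the paper puts the "second obstacle" into the source and you put it into the propagator. Your version makes the cancellation structure of $U_\rel$ completely explicit, at the price of needing the Green-function analogue of Proposition \ref{1obstacledeformation} — but that item is immediate rather than something requiring a new energy estimate, since for $s>0$ the entries of $G_{\mathrm{ret}}(s)$ coincide with the $(1,2)$ entry of $U(s)$, so the support statement of Proposition \ref{1obstacledeformation} applies entrywise to $G^{X_j}_{\mathrm{ret}}(s)-G^{X}_{\mathrm{ret}}(s)$. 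The remaining bookkeeping (straightening each leg, moving the $\epsilon$-approximate reflection points onto $\partial\Omega$ at cost $O(\epsilon)$, extracting a convergent subsequence as $\epsilon\downarrow0$, and reversing the orientation to match $\gamma(0)=x$, $\gamma(L)=y$) is exactly the limiting argument the paper itself uses, so I see no gap.
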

\begin{proof}
We continue using the same notation as in the proof of Proposition \ref{1obstacledifference}. As before the statement also follows from a finite propagation speed consideration. Of course we can assume without loss of generality that $N>1$. Fix $y \in X$ and let $f \in C^\infty_0(X, \C^2)$ be supported in $B_\epsilon(y)$ for some small $\epsilon>0$. 
Let $\calO_j=\bigcup_{k\not=j} \partial \Omega_k=\partial \Omega \setminus \partial \Omega_j$. Furthermore, let $\eta_j$ be a cutoff function supported in $(\calO_j)_\epsilon$ such that $\eta_j=1$ on $(\partial \Omega_k)_{\frac{\epsilon}{2}}$ whenever $k \not=j$. Now define $\eta = \frac{1}{N-1} \sum_{j=1}^N \eta_j$ and observe that $\eta=1$ on $(\partial \Omega)_{\frac{\epsilon}{2}}$.
Next define $\chi = 1 - \eta$ and $\chi_j = 1 - \eta_j$. We consider the function 
$$
g(t) =  \left( (\sum_{j=1}^N \chi_j U_{X_j}(t)) - (N-1) \chi U_0 \right) f.
$$
Then for each $t$, $g(t)$ is a smooth compactly supported function. Moreover, $g(t)$ satisfies the inhomogeneous wave equation
$$
(\partial_t^2 + \Delta) g(t,x) = h(t,x)
$$
on $X$ with initial conditions $g(0,x) = f(0,x)$ and it also satisfies the boundary conditions. Moreover, $\mathcal{D} g=0$ and hence $\mathcal{D} h=0$.
We compute 
\begin{gather*}
h(t) =  (\sum_{j=1}^N [\Box, \chi_j] U_{X_j}(t)) - (N-1) [\Box, \chi]  U_0  f \\
= -\left((\sum_{j=1}^N [\Box, \eta_j] U_{X_j}(t) - (N-1) [\Box, \eta]  U_0)\right) f = - \left( \sum_{j=1}^N [\Box, \eta_j] (U_{X_j}(t)-U_0(t) )\right)  f
\end{gather*}
Next observe that
$$
U_X(t) f = g(t) - \int_0^t G_{\mathrm{ret}}(t-t') h(t') \der t',
$$
since $U_X(t) f$ solves the initial value problem with Dirichlet boundary conditions on $\partial \Omega$. Let $\varphi$ be a smooth cutoff function supported in $B_{\epsilon}(x)$. Then we know, for sufficiently small $\epsilon>0$,
$$
\varphi  U_\rel(t) f=\varphi  U_X(t) f- \varphi  g(t)= -\int_0^t \varphi G_{\mathrm{ret}}(t-t') h(t') \der t'.
$$
That is
$$
\varphi  U_\rel(t) f= -\int_0^t \varphi G_{\mathrm{ret}}(t-t')  \left( \sum_{j=1}^N [\Box, \eta_j] (U_{X_j}(t')-U_0(t') )\right)f \der t'.
$$
Observe that $[\Box, \eta_j]$ is supported in $(\calO_j)_\epsilon$. By the analysis of Proposition \ref{1obstacledifference}, we know that the support of $[\Box, \eta_j] (U_{X_j}(t')-U_0(t') )f$ is non-empty only if there is a piecewise linear continuous path of length $L_1+L_2$ starting in $B_\epsilon(y)$, passing through $(\partial \Omega_j)_\epsilon$ and ending in $(\calO_j)_\epsilon$. Repeating the same arguments, we know that the support of $\varphi G_{\mathrm{ret}}(t-t')  \left( \sum_{j=1}^N [\Box, \eta_j] (U_{X_j}(t')-U_0(t') )\right)$ is non-empty only if there is another linear path of length $L_3$ such that it connects $(\calO_j)_\epsilon$ and $B_{\epsilon}(x)$. As in Proposition \ref{1obstacledifference}, we have $t\ge L=L_1+L_2+L_3-6\epsilon$. The statement for $t\ge0$ now follows by the fact that one can choose $\epsilon>0$ arbitrarily small. A similar argument applies to the case $t\le0$.
\end{proof}

Essentially the same proof also shows another manifestation of finite propagation speed.

\begin{theorem}
\label{Nobstacledeformation}
Suppose that $\Omega = \Omega_1 \cup \ldots \cup \Omega_N$ and $\Omega' = \Omega_1' \cup \Omega_2 \cup \ldots \Omega_N$ are 
two collections of obstacles.
Let $U_\rel$ and ${U'}_\rel$ be the corresponding relative operators.
Then, for $(t,x,y)$ to be in the support of the distributional kernel of ${U'}_\rel - U_\rel$ it is a necessary condition that there exists a piecewise linear path
 $\gamma: [0,L] \to \R^d$ of length $L \leq |t|$ such that $\{\gamma(L),\gamma(0)\}=\{x,y\}$ and there exist $j \not=1$ and $s_1,s_2 \in [0,L]$ so that 
 $\gamma(s_1) \in (\partial\Omega_{1} \setminus \partial\Omega'_{1}) \cup  (\partial\Omega'_{1} \setminus \partial\Omega_{1})$ and $\gamma(s_2) \in \partial\Omega_{j} $.
 \end{theorem}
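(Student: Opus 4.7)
\emph{Main decomposition.} Since $\Omega'_j=\Omega_j$ and therefore $U_{X'_j}=U_{X_j}$ for every $j\ge 2$, all but the $j=1$ differences cancel, so that
$$
U'_\rel-U_\rel=(U_{X'}-U_X)-(U_{X'_1}-U_{X_1}).
$$
Set $\Sigma=(\partial\Omega_1\setminus\partial\Omega'_1)\cup(\partial\Omega'_1\setminus\partial\Omega_1)$ and $\calO=\bigcup_{j\ge 2}\partial\Omega_j$. We have to show that for $(t,x,y)$ to lie in the support of the kernel of $U'_\rel-U_\rel$ there must exist a single piecewise linear path from $y$ to $x$ of length $\le|t|$ that meets both $\Sigma$ and $\calO$.

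\emph{Parallel Duhamel ansatz.} The plan is to apply the construction of Proposition \ref{1obstacledeformation} in parallel to the pairs $(X,X')$ and $(X_1,X'_1)$, using a common cutoff $\eta_\Sigma$ supported in $(\Sigma)_\epsilon$ and equal to $1$ on $(\Sigma)_{\epsilon/2}$; set $\chi_\Sigma=1-\eta_\Sigma$. For $f\in C^\infty_0(X\cap X',\C^2)$ supported in $B_\epsilon(y)$, the ansatz $g_X(t)=\chi_\Sigma U_X(t)f$ satisfies the Dirichlet condition on $\partial\Omega'$: on $\partial\Omega\cap\partial\Omega'$ because $U_Xf=0$ there, and on $\partial\Omega'\setminus\partial\Omega\subset\Sigma$ because $\chi_\Sigma=0$. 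With $h_X=-[\Box,\eta_\Sigma]U_Xf$, Duhamel's principle for $U_{X'}$ yields
$$
(U_{X'}-U_X)(t)f=-\eta_\Sigma U_X(t)f-\int_0^t G_{\mathrm{ret},X'}(t-t')h_X(t')\,\der t',
$$
and the corresponding identity for the pair $(X_1,X'_1)$ holds with $h_{X_1}=-[\Box,\eta_\Sigma]U_{X_1}f$.

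\emph{Three-term analysis and conclusion.} Subtracting the two identities and splitting
$$
G_{\mathrm{ret},X'}h_X-G_{\mathrm{ret},X'_1}h_{X_1}=(G_{\mathrm{ret},X'}-G_{\mathrm{ret},X'_1})h_X+G_{\mathrm{ret},X'_1}(h_X-h_{X_1})
$$
leads to three contributions to $(U'_\rel-U_\rel)f$. The direct term $\eta_\Sigma(U_X-U_{X_1})f$ combines localisation in $(\Sigma)_\epsilon$ with the fact that $U_X-U_{X_1}$ is supported on paths through $\calO$, since $\calO$ is precisely the symmetric difference between $X$ and $X_1$; Proposition \ref{1obstacledeformation} then forces a path through both $\calO$ and $\Sigma$. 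The term involving $h_X-h_{X_1}=-[\Box,\eta_\Sigma](U_X-U_{X_1})f$ inherits the same support structure, composed with the ordinary finite propagation speed of $G_{\mathrm{ret},X'_1}$. The remaining term $(G_{\mathrm{ret},X'}-G_{\mathrm{ret},X'_1})h_X$ is handled by the analogue of Proposition \ref{1obstacledeformation} for the retarded sine kernel: an identical ansatz-and-Duhamel argument shows that $G_{\mathrm{ret},X'}-G_{\mathrm{ret},X'_1}$ has kernel supported on paths through $\calO$, and composing with a source supported in $(\Sigma)_\epsilon$ once again produces a path through both $\Sigma$ and $\calO$. Localising the output by a cutoff $\varphi$ supported in $B_\epsilon(x)$, each contribution produces a continuous piecewise linear path from $B_\epsilon(y)$ to $B_\epsilon(x)$ of length $\le|t|+O(\epsilon)$ passing through both $\Sigma$ and some $\partial\Omega_j$ with $j\ne 1$; letting $\epsilon\to 0$ yields the statement for $t\ge 0$, and $t<0$ follows by the time-reversal argument of Proposition \ref{1obstacledifference}. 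The only nontrivial technical point is the finite-propagation-speed/symmetric-difference support statement for $G_{\mathrm{ret},X'}-G_{\mathrm{ret},X'_1}$, which is not literally Proposition \ref{1obstacledeformation} but follows from a carbon copy of that argument applied to the sine kernel.
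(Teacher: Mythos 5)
Your proof is correct, and it takes a genuinely different route from the paper's. The paper performs a \emph{single} Duhamel step with the retarded propagator $G_{X,\mathrm{ret}}$ of the full configuration $X$, applied to the ansatz $g=\chi_0U_{X'}+\chi_cU_{X_1}-\chi_c'U_{X'_1}$ built from \emph{two} cutoffs, one ($\eta_0$) near the symmetric difference $\Sigma$ and one ($\eta_c$) near $\partial\Omega_c=\bigcup_{j\ge2}\partial\Omega_j$; the source then comes out as $[\Box,\eta_0](U_{X'}-U_{X'_1})f+[\Box,\eta_c](U_{X_1}-U_{X'_1})f$, i.e.\ two single-deformation differences each localised on the \emph{complementary} boundary set, and Proposition \ref{1obstacledeformation} is applied to each. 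You instead run Duhamel in parallel for the two pairs $(X,X')$ and $(X_1,X'_1)$ with a single cutoff near $\Sigma$, subtract, and resolve the difference of the two Duhamel integrals by a resolvent-identity splitting. The price is the extra support statement for $G_{\mathrm{ret},X'}-G_{\mathrm{ret},X'_1}$, which the paper's arrangement avoids entirely; but this is in fact cheaper than you suggest: since $\Delta_X^{-1/2}\sin(s\Delta_X^{1/2})$ is an entry of the matrix $U_X(s)$, the support of the kernel of $G_{\mathrm{ret},X'}(s)-G_{\mathrm{ret},X'_1}(s)$ is controlled \emph{directly} by Proposition \ref{1obstacledeformation} applied to $U_{X'}-U_{X'_1}$ (restricted to $s\ge0$), so no carbon-copy re-run of the argument is needed. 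The remaining ingredients (commutators with cutoffs, finite propagation speed of the boundary-value retarded propagators, the $\epsilon\to0$ limit, time reversal for $t<0$) are the same in both proofs, and your treatment of the direct term $\eta_\Sigma(U_X-U_{X_1})f$ is right: its support forces $x$ into $\Sigma_\epsilon$, so the path through $\calO$ guaranteed by Proposition \ref{1obstacledeformation} automatically also meets $\Sigma$ at its endpoint. The only points you share with the paper in being slightly informal are the verification that the ansatz is admissible for Duhamel (one-sided smoothness up to $\partial\Omega'$, which holds because $\chi_\Sigma$ kills $U_Xf$ near $\partial\Omega_1\setminus\partial\Omega_1'$, and the first-order compatibility $\mathcal{D}g=0$ for the $2\times2$ system) and the implicit assumption $\chi_\Sigma f=f$, i.e.\ $y\notin\Sigma_\epsilon$; both are harmless and handled the same way in the paper.
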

\begin{proof} 
For brevity denote $\partial\calO_2 \cup \ldots \cup \partial\calO_N$ by $\partial\calO_c$. We fix $\epsilon>0$ sufficiently small and choose a cutoff function
$\eta_c \in C^\infty(\R^d)$ supported in $(\partial \Omega_c)_\epsilon$ that equals one near $( \partial\Omega_c)_\frac{\epsilon}{2}$. As before define $\chi_c=1-\eta_c$. We fix $f \in C^\infty_0(B_\epsilon(y),\C^2)$ and $\varphi \in C^\infty_0(B_\epsilon(x))$.

Now choose $\eta_0\in C^\infty_0(\R^d)$ with support in $\left((\partial\Omega_{1} \setminus \partial\Omega'_{1}) \cup  (\partial\Omega'_{1} \setminus \partial\Omega_{1})\right)_\epsilon$ such that $\eta_0=1$ on $\left((\partial\Omega_{1} \setminus \partial\Omega'_{1}) \cup  (\partial\Omega'_{1} \setminus \partial\Omega_{1})\right)_{\frac{\epsilon}{2}}$. Then we set $\chi_0=1-\eta_0$ and $\chi_c'=\chi_0 \chi_c=1-\eta_0-\eta_c$. 
We need to analyse under which conditions the distribution
$$
\varphi \left( U_\rel-U_\rel' \right)f=\varphi \left( \left(U_X-U_{X'}\right)-\left(U_{X_1}-U_{X'_1}\right) \right)f=\varphi U_X(t) f- \varphi g(t).
$$
is non-zero, where $X_1'=\R^d\backslash\partial \Omega_{1}'$ and
$
g=\chi_0 U_{X'}+ \chi_c U_{X_{1}}- \chi_c' U_{X_{1}'}$. The function $g$ satisfies boundary conditions on $X$
and solves the inhomogeneous wave equation
$\Box g= h$, where
\begin{align}
h =  -\left([\Box,\eta_0]\left(U_{X'}-U_{X'_1}\right)+[\Box,\eta_c]\left(U_{X_1}-U_{X'_1}\right)\right)f.\label{h of Nobstacledeformation}
\end{align}
 Then we have
$$
 g(t) =  U_X(t) f+\int_0^t  G_{X,\mathrm{ret}}(t-t') h(t') \der t',
$$
where $G_{X,\mathrm{ret}}$ is the forward propagator obtained from $X$. Together with equation \eqref{h of Nobstacledeformation}, one deduces
\begin{multline*}
\varphi \left( U_\rel-U_\rel' \right)f=- \varphi  \int_0^t  G_{X,\mathrm{ret}}(t-t') h(t') \der t'
\\
=\int_0^t  \varphi G_{X,\mathrm{ret}}(t-t')\left([\Box,\eta_0]\left(U_{X'}-U_{X'_1}\right)+ [\Box,\eta_c]\left(U_{X_1}-U_{X'_1}\right)\right)(t')f \der t'.
\end{multline*}
If this is non-zero we must have that either $\varphi G_{X,\mathrm{ret}}(t-t')[\Box,\eta_0]\left(U_{X'}-U_{X'_1}\right)(t') f$ is nonzero for some $0\le t'\le t$, or that $\varphi G_{X,\mathrm{ret}}(t-t')  [\Box,\eta_1]\left(U_{X_1}-U_{X'_1}\right)(t')f$ is nonzero for some $0\le t'\le t$. Suppose that the first term is non-zero.
We note that $[\Box,\eta_0]$ is supported in $\left((\partial\Omega_{1} \setminus \partial\Omega'_{1}) \cup  (\partial\Omega'_{1} \setminus \partial\Omega_{1})\right)_\epsilon$ and the symmetric difference of $\partial X'$ and $\partial \Omega'_1$ is $\partial \Omega_c$. Applying Proposition \ref{1obstacledeformation} to $U_{X'}$ and $U_{X'_1}$, we conclude that there exits a piecewise linear continuous path starting from $B_\epsilon(y)$ to $(\partial \Omega_c)_\epsilon$ ($\epsilon$-neighbourhood of boundaries of all the other obstacles) and then ends in $\left((\partial\Omega_{1} \setminus \partial\Omega'_{1}) \cup  (\partial\Omega'_{1} \setminus \partial\Omega_{1})\right)_\epsilon$. A similar argument applies to the second term.
\end{proof}

Recall that $\delta$ is the minimal distance between two objects, i.e.
$$
\delta = \inf \{ \mathrm{dist}(x,y) \mid x \in \partial \Omega_j, y \in \partial \Omega_k, j \not=k \}.
$$
The behaviour of $w_\rel$ around origin is given in the following corollary. 
\begin{corollary}
 The distribution $w_\rel$ is supported away from $(-2 \delta, 2 \delta)$.
\end{corollary}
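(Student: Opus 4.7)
The plan is to reduce the claim directly to Theorem~\ref{Nobstacledifference}. The relative wave trace satisfies $2 w_\rel(t) = \tr(U_\rel(t))$ as distributions, so $w_\rel$ is, up to the factor of $2$ coming from the matrix form of $U_\rel$, the distribution on $\R$ obtained by restricting the Schwartz kernel $\tilde u_\rel(t,x,y)$ to the diagonal $y=x$ and integrating over $x \in X$. Consequently, to show that $\supp(w_\rel)$ avoids $(-2\delta,2\delta)$ it is enough to verify that $(t,x,x)$ lies outside the support of $\tilde u_\rel$ for every $x \in X$ whenever $|t|<2\delta$.

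So suppose $(t,x,x) \in \supp(\tilde u_\rel)$. Theorem~\ref{Nobstacledifference} produces a closed piecewise linear continuous path $\gamma:[0,L]\to \R^d$ with $\gamma(0)=\gamma(L)=x$, total length $L \le |t|$, together with parameters $s_1,s_2\in[0,L]$ and distinct indices $j_1\ne j_2$ such that $p_i := \gamma(s_i) \in \partial\Omega_{j_i}$. After relabeling assume $s_1 \le s_2$, so the path decomposes into three polygonal arcs $x \to p_1 \to p_2 \to x$. Each piecewise linear arc has length at least the Euclidean distance between its endpoints, and two applications of the triangle inequality yield
\[
L \;\ge\; |x-p_1| + |p_1-p_2| + |p_2-x| \;\ge\; 2|p_1-p_2| \;\ge\; 2\,\dist(\partial\Omega_{j_1},\partial\Omega_{j_2}) \;\ge\; 2\delta.
\]
Hence $|t|\ge 2\delta$, which proves the corollary.

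All of the real work has already been done in Theorem~\ref{Nobstacledifference}; there is no genuine obstacle here beyond the elementary geometric observation that any closed path through two disjoint obstacles must traverse twice the distance between them. The only minor bookkeeping point is that the kernel-level support statement passes to a support statement for the scalar distribution $w_\rel$, which is immediate because integration in the transverse variable $x$ preserves the $t$-support.
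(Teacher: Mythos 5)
Your proof is correct and follows essentially the same route as the paper: apply Theorem~\ref{Nobstacledifference} to diagonal points $(t,x,x)$ of the kernel and use the triangle inequality to force any closed path hitting two distinct obstacles to have length at least $2\delta$. You merely spell out the triangle-inequality chain and the passage from kernel support to trace support more explicitly than the paper does.
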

\begin{proof}
In the proof of Theorem \ref{Nobstacledifference} we know that if $(t,x,y)$ is in the support of $\tilde u_\rel$, then there exists a piecewise linear continuous path of length $L\le |t|$ that is reflected by two different obstacles, where $y$ and $x$ are starting and ending points respectively. Since $w_\rel(t)$ is the trace of $\frac{1}{2}U_\rel(t)$, we know that if $t$ is in the support of $w_\rel(t)$, then there is a piecewise linear continuous closed path that intersects two different obstacles. Hence by the triangle inequality, we conclude that $L\ge 2 \delta$, which also implies $|t|\ge 2 \delta$.
\end{proof}
Equations \eqref{wac eqn}, \eqref{wp eqn} and \eqref{wrel eqn} imply
$$
w_\rel=\tr \left( \Pi_M  \left[\left(\cos(t \Delta_X^\frac{1}{2}) - \cos(t \Delta_0^\frac{1}{2}) \right)
-\sum_{j=1}^N   \left( \cos(t \Delta_{X_j}^\frac{1}{2}) - \cos(t \Delta_0^\frac{1}{2})   \right)\right] \Pi_M\right),
$$
which means the singular behaviour of $w_\rel$ boils down to the study of singular supports of $w_M$ in \eqref{wM eqn} with different obstacle configurations. Therefore, we could use Theorem \ref{length spectrum thm} to study the singular support of the distribution $\hat \xi_\rel$. The contribution of an isolated non-degenerate periodic billiard trajectory can be computed via the Gutzwiller-Duistermaat-Guillemin formula \cite{DuistermaatGuillemin,Gutzwiller}. 

To simplify the discussion we impose the condition that the obstacles are locally strictly convex near points that have distance equal to $\delta$ from the other obstacles.
This will guarantee that there is a finite number of isolated non-degenerate bouncing ball orbits of length $2 \delta$ between the different obstacles and the Maslov index vanishes  (see Theorem \ref{smoothzerothm}).
Under this hypothesis one can easily compute the leading singularity of $\hat \xi_{\rel}$. 

A bouncing ball orbit is a $2$-link periodic trajectory of the billiard flow. The existence of a non-degenerate bouncing ball orbit plays an important role in Zelditch's work on inverse spectral problems for analytic domains \cite{Zelditch2004,Zelditch2009}. In general, shortest periodic billiard trajectories in a smooth domain are not necessarily bouncing ball orbits (see Ghomi \cite{Ghomi} for a discussion and geometric conditions that ensure this). In our setting this does however not cause a problem.

We make this now precise by introducing the set $\mathcal{B}_{\delta}$ as
$$
\mathcal{B}_{\delta} = \{ (x,y) \in \partial\Omega \times \partial\Omega  \mid  \mathrm{dist}(x,y) = \delta, (x,y) \in \partial\Omega_i \times \partial\Omega_j \textrm{ with }i\ne j \}.
$$
The set $\mathcal{B}_{\delta}$ is symmetric and we define $\mathcal{B}_{\partial \Omega,\delta}$ to be the projection
of $\mathcal{B}_{\delta}$ on the first factor, i.e. $\mathcal{B}_{\partial \Omega,\delta} = \{ x \in \partial \Omega \mid \exists y \in \partial \Omega, (x,y) \in \mathcal{B}_{\delta}\}$.

We have the following elementary proposition.

\begin{proposition}
	\label{bouncing ball orbit}
Let $\delta>0$, as before, be the minimal distance between the disconnected components. For two points $q_1 \in \partial \Omega_i$,
$q_2 \in \partial \Omega_j$ with $i \not=j$ and $\mathrm{dist}(q_1,q_2)=\delta$ denote by $\overline{q_1q_2}$ the linear path connecting them.
Then $\overline{q_1q_2}$ is a bouncing ball orbit with period $2\delta$.
\end{proposition}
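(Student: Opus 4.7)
The plan is to verify the two defining features of a bouncing ball orbit in turn: first that the open segment joining $q_1$ and $q_2$ lies entirely inside $M$ (so that no intermediate reflection takes place), and second that the segment meets $\partial\Omega_i$ and $\partial\Omega_j$ orthogonally, so that the billiard reflection law produces the reversed direction and the orbit closes up after one return trip of total length $2\delta$.

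For the first point, suppose some interior point $p$ of the segment $\overline{q_1q_2}$ belongs to $\partial\Omega_k$ for some $k$. I would split into cases. If $k\notin\{i,j\}$, then $p\in\partial\Omega_k$ and $q_1\in\partial\Omega_i$ satisfy $\mathrm{dist}(q_1,p)<\delta$, contradicting the minimality of $\delta$. If $k=j$, then $p\in\partial\Omega_j$ but $\mathrm{dist}(q_1,p)<\delta$, again contradicting minimality; the case $k=i$ is symmetric. Hence the open segment misses $\partial\Omega$ entirely, and since its endpoints are already on the boundary the segment lies in $\overline{M}$.

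For the second point, I would use a standard variational argument. The squared distance function $F:\partial\Omega_i\times\partial\Omega_j\to\mathbb{R}$, $F(x,y)=|x-y|^2$, is smooth and attains its global minimum at $(q_1,q_2)$ on this compact set (compactness follows from the $\Omega_k$ being bounded). Computing the differential, one finds $\mathrm{d}_xF(q_1,q_2)[v]=-2(q_2-q_1)\cdot v$ for $v\in T_{q_1}\partial\Omega_i$ and similarly in the second variable. Setting these to zero gives $q_2-q_1 \perp T_{q_1}\partial\Omega_i$ and $q_1-q_2 \perp T_{q_2}\partial\Omega_j$, i.e.\ the segment is normal to the boundary at both endpoints.

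With these two facts the conclusion is immediate. A unit vector along $\overline{q_1q_2}$ at $q_1$ is normal to $T_{q_1}\partial\Omega_i$, so its reflection across the tangent hyperplane is its negative; the billiard trajectory starting at $q_1$ along this direction therefore travels to $q_2$, reflects back along the reversed segment, and returns to $q_1$ with the original direction. The resulting trajectory is a closed $2$-link billiard orbit, i.e.\ a bouncing ball orbit, with period equal to twice the segment length, namely $2\delta$. The only genuinely substantive step is the cleanest check that $\overline{q_1q_2}$ does not graze or cross a third boundary component; everything else is a one-line variational computation.
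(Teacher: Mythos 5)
Your proof is correct and follows essentially the same route as the paper's: the segment lies in $M$ because any intersection with another boundary point would produce a pair of points in distinct components at distance less than $\delta$, and orthogonality at the endpoints follows from the first-order condition for minimising the distance over $\partial\Omega_i\times\partial\Omega_j$ (you differentiate the restricted squared-distance function directly, the paper phrases the same condition via Lagrange multipliers with boundary defining functions). No gaps.
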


\begin{proof}
The set $\{ (q_1,q_2) \in \partial\Omega \times \partial\Omega  \mid (q_1,q_2) \in \partial\Omega_i \times \partial\Omega_j \textrm{ with }i\ne j \}$ is compact and therefore $\mathcal{B}_{\delta}$ is non-empty and for all 
$(q_1,q_2) \in \mathcal{B}_{\delta}$ there exists a straight line in $\R^d$ of length $\delta$ connecting them. This straight line does not intersect any other points of $\partial \Omega$ since that would give a pair points of distance smaller than $\delta$. Therefore this straight line $\overline{q_1q_2}$ is in $M$ and we can restrict to the case when only two compact obstacles are present, i.e. $\Omega=\Omega_1 \cup \Omega_2$.  If $\phi_j$ is a boundary defining function for $\Omega_j$ then
a length minimising straight line satisfies $\der [(q_1-q_2)^2 - 2\alpha \phi_1(q_1) - 2\beta \phi_1(q_2) ]=0$ for some Lagrange multipliers $\alpha,\beta$. This leads to
$q_1-q_2 = \alpha \der \phi_1(q_1)$ and $q_1-q_2 = -\beta \der \phi_1(q_1)$. Hence, $q_1-q_2$ is normal to the tangent space of $T_{q_j} \partial \Omega_j$ and therefore this is a bouncing ball orbit.
\end{proof}

If the $\Omega$ is strictly convex in a neighborhood of $\mathcal{B}_{\partial \Omega,\delta}$ then $\mathcal{B}_{\partial \Omega,\delta}$ is actually a discrete set of points consisting then of the reflection points of bouncing ball orbits between objects.

Since the relative spectral shift function, $\xi_\rel(\lambda)$, only makes sense for at least two obstacles, we now assume that $\Omega$ has at least two compact connected components. One of immediate consequence of Proposition \ref{bouncing ball orbit} is the following theorem.

\begin{theorem}
	\label{smoothzerothm}
The distribution $t\hat \xi_{\rel}(t)$ is real-analytic in $(-2\delta,2\delta)$ and its imaginary part vanishes in $(-2\delta,2\delta)$.
If $\Omega$ is locally strictly convex near $\mathcal{B}_{\partial \Omega,\delta}$ then there is an isolated singularity of $t\hat \xi_{\rel}(t)$ at $2\delta$ of the form
\begin{equation}
\label{firstsingularity}
 \sum_{\gamma_j}\frac{\delta}{\pi |\det(I-P_{\gamma_j})|^{\frac{1}{2}}}(t-2\delta+\rmi 0)^{-1}+L^1_{\loc},
\end{equation}
in $\mathcal{D}'(\R)/L^1_{\loc}(\R)$.
Here $\gamma_j$ are the shortest periodic billiard trajectory between the objects. Here $P_{\gamma_j}$ is the linear Poincar\'{e} map of $\gamma_j$. 
\end{theorem}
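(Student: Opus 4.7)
For the regularity statement, I will exploit the one-sided holomorphy of $\hat\xi_\rel$. Since $\xi_\rel$ is real-valued and supported in $[0,\infty)$, the Paley--Wiener theorem shows that $\hat\xi_\rel$ is the boundary value of a function holomorphic in a fixed half-plane of $\C$. The preceding corollary says that $w_\rel$ is supported outside $(-2\delta,2\delta)$, so the identity $w_\rel(t)=-t\,\Im\hat\xi_\rel(t)$ forces $\Im\hat\xi_\rel$ to vanish on $(-2\delta,2\delta)$. By Schwarz reflection across this interval $\hat\xi_\rel$ extends holomorphically through it, and multiplication by $t$ preserves real analyticity, yielding the first assertion.

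To extract the leading singularity at $t=2\delta$, I would first argue that each of the single-obstacle terms $w_j$ is smooth near $2\delta$: a single strictly convex body admits no closed broken geodesic of length as short as $2\delta$ in its complement, so the singularity of $w_\rel$ at $2\delta$ coincides with that of $w$. Combining Theorem \ref{length spectrum thm} with Proposition \ref{bouncing ball orbit} shows that the length spectrum of $M$ near $2\delta$ consists exactly of bouncing ball orbits $\gamma_j$ between distinct obstacles; the local convexity hypothesis then guarantees that $\mathcal{B}_{\partial\Omega,\delta}$ is finite, that each $\gamma_j$ is isolated and non-degenerate (i.e.\ $\det(I-P_{\gamma_j})\neq 0$), and that its Maslov index vanishes (the two reflections off strictly convex components contribute cancelling half-integer phases). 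Using finite propagation speed as in the proof of Theorem \ref{length spectrum thm}, we may replace $M$ by the compact strictly concave manifold $\tilde M$ constructed there without altering the singular behaviour at $t=2\delta$.

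The Gutzwiller--Duistermaat--Guillemin trace formula (in the Andersson--Melrose form for manifolds with strictly concave boundary) then yields the contribution of each $\gamma_j$ to $w_\rel$, with primitive period $T_{\gamma_j}^\#=2\delta$, as
\[
 \frac{T_{\gamma_j}^\#}{2\pi|\det(I-P_{\gamma_j})|^{1/2}}\,(t-2\delta+\rmi 0)^{-1}+L^1_{\loc}.
\]
To convert this into a singularity of $t\hat\xi_\rel$, I invoke once more the Schwarz reflection from the first paragraph: the identity $w_\rel(t)=-t\,\Im\hat\xi_\rel(t)$ together with the fact that $t\hat\xi_\rel$ is a one-sided boundary value of a holomorphic function pins down the $(t-2\delta+\rmi 0)^{-1}$ side of the singularity and produces the factor $\delta/\pi$ claimed in the theorem. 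The main obstacle in this plan is the Maslov and orientation bookkeeping inside the trace formula and the identification of the correct boundary-value side; everything else is a routine finite-propagation-speed reduction to the compact setting in which the Andersson--Melrose calculus directly applies.
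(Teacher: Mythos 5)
Your route is essentially the paper's: reality of $t\hat\xi_\rel$ on $(-2\delta,2\delta)$ combined with one-sidedness of the boundary value (the paper phrases this via analytic wave-front sets rather than Schwarz reflection, but it is the same edge-of-the-wedge argument), then identification of the length-$2\delta$ periodic trajectories with the bouncing ball orbits of Proposition \ref{bouncing ball orbit}, and finally the Guillemin--Melrose formula with $N_j=2$ reflections and vanishing Maslov index; the coefficient bookkeeping you defer is exactly what the paper carries out in \eqref{GMtypesingularity}--\eqref{DGformula}.

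There is, however, one genuine gap. The theorem assumes strict convexity only \emph{locally near} $\mathcal{B}_{\partial\Omega,\delta}$, whereas your second paragraph silently upgrades this to global strict convexity: the claim that ``a single strictly convex body admits no closed broken geodesic of length as short as $2\delta$ in its complement,'' the appeal to Theorem \ref{length spectrum thm} (whose hypothesis is that \emph{every} $\Omega_j$ is strictly convex), and the description of $\tilde M$ as strictly concave all fail if some $\Omega_j$ has a concave portion away from the closest-point set --- such a component can trap periodic billiard trajectories in its own exterior, and the compactified manifold then admits gliding rays, so the Andersson--Melrose calculus and the clean trace formula do not apply directly. The paper closes this gap with Theorem \ref{Nobstacledeformation}: since the region where the obstacles are modified lies, by compactness, at distance $\geq\delta+\epsilon'$ from the other obstacles, any closed path meeting both the deformed region and a second obstacle has length $>2\delta+\epsilon'$, so the obstacles may be replaced by globally strictly convex ones without changing $w_\rel$ on a neighbourhood of $[-2\delta,2\delta]$; only after this reduction do your arguments become available. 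A second, much smaller, imprecision: from $t\,\Im\hat\xi_\rel(t)=0$ on $(-2\delta,2\delta)$ you may only conclude that $\Im\hat\xi_\rel$ is supported at $\{0\}$ there, so the reflection argument should be applied to $t\hat\xi_\rel$ (which is what the statement concerns) rather than to $\hat\xi_\rel$ itself.
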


\begin{proof}
Recall that
$$
 w_\rel(t) = -t \Im{\hat \xi_\rel}(t).
$$
Since $w_\rel$ vanishes on $(-2 \delta,2 \delta)$ the distribution $g$ obtained by restricting $t\hat \xi_\rel(t)$ to  $(-2 \delta,2 \delta)$ is real valued. 
Since the analytic wave-front set of the complex conjugate of a distribution is obtained by reflection its wavefront set about the origin in the fibres of the cotangent bundle we have that $\mathrm{WF}_A(g)$ is invariant with respect to this reflection. On the other hand $\hat \xi_\rel$
is the boundary value of a function that is analytic in the upper half plane. Hence, $\mathrm{WF}_A(g)$ is one-sided.
It follows that the analytic wavefront set of $g$ is empty and $g$ is real analytic.
We will now use that $\Omega$ is locally strictly convex near $\mathcal{B}_{\partial \Omega,\delta}$. Using Theorem \ref{Nobstacledeformation} we can change the obstacles away from the set $\mathcal{B}_{\partial \Omega,\delta}$ without changing the relative wave trace in a neighborhood of the interval $(0,2 \delta)$. It is straightfoward to see that the obstacles can modified in this way into strictly convex ones. We can therefore assume without loss of generality that the obstacles are strictly convex.

In general, the singularities of $\Im(t\hat \xi_\rel(t))$ are contained in the length spectrum of $M$ as described in Theorem \ref{length spectrum thm}. 
Hence, the first non-trivial singularity can only appear at $t=2\delta$. 
By convexity the set of $2\delta$-periodic billiard trajectories consists of simple non-degenerate billiard trajectories with zero Maslov index. 
 By Proposition \ref{bouncing ball orbit}, these trajectories are also bouncing ball orbits. Both statements can be found in \cite{MR668585} and we therefore only briefly show the computation involved.

If $T\in \Sing(w_\rel)$ and only closed simple billiard trajectories are of $T$-period, then one concludes from Duistermaat, Guillemin and Melrose's work \cite{DuistermaatGuillemin,GuilleminMelrose} that the singularity of $w_\rel$ at $t=T$ is given by the real part of
\begin{equation}
\label{GMtypesingularity}
\left[\sum_{\gamma_j}\rmi^{\sigma_j}(-1)^{N_{j}}\frac{T_{j}^{\#}}{2\pi |\det(I-P_{\gamma_j})|^{\frac{1}{2}}}\frac{\rmi}{t-T+\rmi 0}\right]+L^1_{\loc},
\end{equation}
where $T_{j}^{\#}$ is the primitive period of $\gamma_j$, $N_{j}$ is the number of reflections in $\gamma_j$ and $\sigma_j$ is the Maslov index associated with $\gamma_j$ (see  \cite[Theorem 2]{GuilleminMelrose}).

Since the number of reflections is two and the Maslov index contribution vanishes  this gives
\begin{equation}
\label{DGformula}
-\sum_{\gamma_j}\frac{2 \delta}{2\pi |\det(I-P_{\gamma_j})|^{\frac{1}{2}}}(t-2\delta+\rmi 0)^{-1}+L^1_{\loc}.
\end{equation}

\end{proof}

In dimension two the dynamic of scattering billiard has been well studied (see, for instance, \cite{KozlovTreshchev,Sinai1970}). Following from the study of the Birkhoff Billiard in \cite[Chapter II]{KozlovTreshchev} one directly computes the Poincar\'e map and obtains the following corollary.

\begin{corollary}
	\label{2d sing}
If $\Omega$ is locally strictly convex near $\mathcal{B}_{\partial \Omega,\delta}$ and $d=2$ then the first singularity of $t\hat \xi_{\rel}(t)$ is of the form
\begin{equation*}
\sum_{\gamma_j}\frac{1}{2\pi} c_j (t-2\delta+ \rmi 0)^{-1}+L^1_{\loc},
\end{equation*}
where for each bouncing ball orbit $\gamma_j$ of length $2 \delta$ we have
$$
 c_j = \left(\frac{\delta r_j \rho_j }{r_j +\rho_j +\delta}\right)^{\frac{1}{2}}.
$$
Here $r_j$ and $\rho_j$ are the reciprocal of the curvatures of $\Omega$ at the two points of $\gamma_j \cap \partial \Omega$. 
\end{corollary}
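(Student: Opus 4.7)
The plan is to start from the formula already established in Theorem \ref{smoothzerothm}, which expresses the leading singularity as
$\sum_{\gamma_j} \frac{\delta}{\pi|\det(I - P_{\gamma_j})|^{1/2}}(t - 2\delta + \rmi 0)^{-1}$
modulo $L^1_{\loc}$. Corollary \ref{2d sing} therefore reduces to the explicit evaluation of $|\det(I - P_{\gamma_j})|$ for a bouncing ball orbit between two strictly convex plane curves, followed by the identification $\frac{\delta}{\pi|\det(I-P_{\gamma_j})|^{1/2}} = \frac{c_j}{2\pi}$.

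The next step is to write the linearised Poincaré map explicitly in dimension two. I would parametrise the transverse Jacobi fields along $\gamma_j$ by the standard symplectic coordinates $(q,p)$, where $q$ is the transverse displacement and $p$ the transverse component of the velocity. Along a free flight segment of length $\delta$ these evolve by the shear $F_\delta = \bigl(\begin{smallmatrix} 1 & \delta \\ 0 & 1 \end{smallmatrix}\bigr)$, while at a normal reflection off a strictly convex component with radius of curvature $r$ at the reflection point they are transformed by the defocusing matrix $R_r = \bigl(\begin{smallmatrix} 1 & 0 \\ 2/r & 1 \end{smallmatrix}\bigr)$. Since $\gamma_j$ consists of two free segments of length $\delta$ separated by normal reflections at points with reciprocal curvatures $r_j$ and $\rho_j$, we obtain
\begin{equation*}
 P_{\gamma_j} \;=\; R_{\rho_j}\, F_\delta\, R_{r_j}\, F_\delta \;\in\; SL(2,\R),
\end{equation*}
which is precisely the specialisation of the Birkhoff billiard map of \cite[Chapter II]{KozlovTreshchev} to a two-link periodic orbit.

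A direct multiplication of the four factors gives $\tr P_{\gamma_j} = 2 + 4\delta/r_j + 4\delta/\rho_j + 4\delta^2/(r_j\rho_j)$, and since $\det P_{\gamma_j} = 1$ one has
\begin{equation*}
 \det(I - P_{\gamma_j}) \;=\; 2 - \tr P_{\gamma_j} \;=\; -\frac{4\delta\,(r_j + \rho_j + \delta)}{r_j\,\rho_j}.
\end{equation*}
Taking the absolute value, extracting the square root, and inserting into the coefficient from Theorem \ref{smoothzerothm} produces
\begin{equation*}
 \frac{\delta}{\pi|\det(I - P_{\gamma_j})|^{1/2}} \;=\; \frac{1}{2\pi}\sqrt{\frac{\delta\, r_j\, \rho_j}{r_j + \rho_j + \delta}} \;=\; \frac{c_j}{2\pi},
\end{equation*}
which is the claimed leading singularity.

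The main obstacle is pinning down the correct sign in the reflection matrix. Reflection off $\partial\Omega_j$ viewed from the exterior region $M$ is defocusing because $\Omega_j$ is convex into $M$, so the second fundamental form of $\partial\Omega_j$ with respect to the unit normal pointing into $M$ contributes with the sign opposite to the familiar "concave mirror" optics convention; this is what forces the $+2/r$ entry in $R_r$ rather than the $-2/r$ one might naively borrow from optics. I would check this sign either by a direct second-variation calculation for the broken-geodesic length functional along $\gamma_j$ or, equivalently, by matching to the Birkhoff map linearisation in \cite[Chapter II]{KozlovTreshchev}. Once this point is settled the rest is routine $2 \times 2$ matrix algebra and substitution.
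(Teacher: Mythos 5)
Your proposal is correct and follows essentially the same route as the paper, which simply asserts that one "directly computes the Poincar\'e map" following the Birkhoff billiard analysis of \cite[Chapter II]{KozlovTreshchev}: the factorisation $P_{\gamma_j}=R_{\rho_j}F_\delta R_{r_j}F_\delta$ with defocusing reflection matrices, the trace computation, and the identity $\det(I-P_{\gamma_j})=2-\tr P_{\gamma_j}$ all check out and reproduce $c_j$ exactly. Your worry about the sign of the reflection matrix is well placed but harmless here: even with the alternative convention $-R_r$ the two orientation flips over a two-link orbit cancel, so $|\det(I-P_{\gamma_j})|$ is unchanged.
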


In dimension three we could not find literature but the computation of the Poincar\'{e} map is straightforward and results in the following corollary.

\begin{corollary}
	\label{3d sing}
If $\Omega$ is locally strictly convex near $\mathcal{B}_{\partial \Omega,\delta}$ and $d=3$ then the first singularity of $t\hat \xi_{\rel}(t)$ is of the form
\begin{equation*}
\label{2Dfirstsingularity}
\sum_{\gamma_j}\frac{1}{2\pi} c_j (t-2\delta+ \rmi 0)^{-1}+L^1_{\loc},
\end{equation*}
where for each bouncing ball orbit $\gamma_j$ of length $2 \delta$ the coefficient $c_j$ is the geometric invariant of $\gamma_j$ given by
 \begin{equation*}
   c_j=\frac{\sqrt{\rho_{1} \rho_{2} r_{1} r_{2}}}{\sqrt{2 D}},
   \end{equation*}
where
\begin{gather}\nonumber
 D_j= 2 \delta ^2+2 \delta  \rho_{1}+2 \delta  \rho_{2}+2 \rho_{1} \rho_{2}+r_{1} \left(2 \delta +\rho_{1}+\rho_{2}+2 r_{2}\right)\\ +2 \delta  r_{2}-\left(\rho_{1}-\rho_{2}\right) \left(r_{1}-r_{2}\right) \cos (2 \theta )+\rho_{1} r_{2}+\rho_{2} r_{2} \label{Dref}
\end{gather}
and $r_{1},r_{2}$ are the radii of principal curvature at the first point in $\gamma_j \cap \partial \Omega$, 
$\rho_{1},\rho_{2}$ are the radii of principal curvature at the second point in $\gamma_j \cap \partial \Omega$.
Finally $\theta$ is the angle between the direction of the principal curvature corresponding to $r_{1}$ and the principal curvature corresponding to $\rho_{1}$.
\end{corollary}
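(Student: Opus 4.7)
The plan is to reduce the problem to a direct computation of $|\det(I - P_{\gamma_j})|^{1/2}$ and then to check that the formula agrees with the coefficient given in the statement. By Theorem \ref{smoothzerothm}, the singularity of $t\hat\xi_\rel(t)$ at $t=2\delta$ has coefficient
$$
\frac{\delta}{\pi |\det(I-P_{\gamma_j})|^{1/2}},
$$
for each bouncing ball orbit $\gamma_j$. Matching this with $\frac{1}{2\pi}c_j$ as written in the corollary, we see it suffices to prove
$$
|\det(I-P_{\gamma_j})| = \frac{8\delta^2 D_j}{\rho_1 \rho_2 r_1 r_2}.
$$
Thus the entire content of the corollary is the explicit evaluation of $\det(I - P_{\gamma_j})$ in terms of the principal curvature radii at the two bounce points and the twist angle $\theta$.

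First I would set up symplectic coordinates transverse to the bouncing ball orbit. Since $d=3$, the transverse phase space is $4$-dimensional and the linear Poincar\'{e} map is a $4\times 4$ symplectic matrix. Choosing orthonormal frames $\{e_1,e_2\}$ of $T_{q_1}\partial\Omega$ and $\{f_1,f_2\}$ of $T_{q_2}\partial\Omega$ aligned with principal curvature directions, the Weingarten maps at $q_1$ and $q_2$ are diagonal with entries $1/r_1,1/r_2$ and $1/\rho_1,1/\rho_2$ respectively. Parallel transport along the segment $\overline{q_1 q_2}$ (which is a straight line in $\R^3$, so the transport is trivial up to a choice of reference) identifies $T_{q_1}^\perp \gamma_j$ with $T_{q_2}^\perp \gamma_j$, and in these identified coordinates the frame $\{f_1,f_2\}$ differs from $\{e_1,e_2\}$ by a rotation by the angle $\theta$ defined in the statement.

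Next, I would express the linearized billiard map as the product of four elementary symplectic matrices: free propagation along the segment of length $\delta$ from $q_1$ to $q_2$, reflection at $q_2$, free propagation from $q_2$ back to $q_1$, and reflection at $q_1$. Free propagation acts on $(y,\eta)\in\R^2\times\R^2$ by $(y,\eta)\mapsto(y+\delta\eta,\eta)$, so it is
$$
F=\begin{pmatrix} I & \delta I \\ 0 & I \end{pmatrix}.
$$
Reflection at a boundary point whose Weingarten operator (with respect to the outward normal) is $W$ is given, in the linearized billiard picture, by
$$
R_W = \begin{pmatrix} I & 0 \\ -2W & I \end{pmatrix},
$$
so reflection at $q_1$ uses $W_1=\mathrm{diag}(1/r_1,1/r_2)$ and reflection at $q_2$ uses $\rho_\theta^{-1}\mathrm{diag}(1/\rho_1,1/\rho_2)\rho_\theta$, where $\rho_\theta\in SO(2)$ is the rotation by $\theta$. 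This yields $P_{\gamma_j} = R_{W_1} F R_{W_2} F$ (the overall sign of reflection is absorbed in the identification, since $P_{\gamma_j}$ is the Poincar\'e return map).

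Finally I would compute $\det(I - P_{\gamma_j})$. This is a $4\times 4$ determinant whose entries are polynomials in $\delta$, $1/r_1, 1/r_2, 1/\rho_1, 1/\rho_2$ and $\cos 2\theta, \sin 2\theta$. The hardest part is this algebraic computation: one should expand $P_{\gamma_j}$ in block form, conjugate by $\rho_\theta$ where convenient, and organize the resulting polynomial so that it matches $D_j$ in \eqref{Dref}; the appearance of only $\cos 2\theta$ (rather than $\sin\theta$ or $\cos\theta$) comes from the symmetry of the Weingarten maps and their quadratic appearance in $P_{\gamma_j}$. Once the identity
$$
|\det(I - P_{\gamma_j})| = \frac{8\delta^2 D_j}{\rho_1 \rho_2 r_1 r_2}
$$
is verified, taking square roots and substituting into Theorem \ref{smoothzerothm} yields the claimed coefficient $c_j = \sqrt{\rho_1 \rho_2 r_1 r_2}/\sqrt{2 D_j}$. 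The convexity assumption ensures that $I - P_{\gamma_j}$ is invertible (no eigenvalue $1$), so the absolute value is unambiguous.
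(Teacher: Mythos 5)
Your strategy coincides with the paper's: the corollary is obtained by substituting an explicit evaluation of $|\det(I-P_{\gamma_j})|$ into Theorem \ref{smoothzerothm}, and your reduction to the target identity $|\det(I-P_{\gamma_j})|=8\delta^2 D_j/(\rho_1\rho_2 r_1 r_2)$ is correct (the paper offers no more detail than the remark that ``the computation of the Poincar\'e map is straightforward''). However, the one explicit ingredient you commit to has the wrong sign. For a bouncing ball orbit between two convex \emph{obstacles} each mirror is convex toward the ray, i.e.\ dispersing, so the normal-incidence reflection in Jacobi coordinates must be
$$
R_W=\begin{pmatrix} I & 0\\ +2W & I\end{pmatrix},\qquad W=\mathrm{diag}(1/r_1,1/r_2)>0,
$$
not $-2W$: the sign you wrote is the focusing one appropriate to a billiard \emph{inside} a convex table. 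This is not an absorbable convention. In $d=2$ your matrices give $\det(I-R_{W_1}FR_{W_2}F)=4\delta(r+\rho-\delta)/(r\rho)$ and hence $c_j=\sqrt{\delta r\rho/(r+\rho-\delta)}$, contradicting Corollary \ref{2d sing}; with the dispersing sign one finds $|\det(I-P_{\gamma_j})|=4\delta(r+\rho+\delta)/(r\rho)$ and the correct $c_j=\sqrt{\delta r\rho/(r+\rho+\delta)}$. (The dispersing sign also makes the orbit hyperbolic for all $\delta$, as it must be for two convex scatterers, whereas your sign permits elliptic behaviour.)

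The second issue is that the decisive step --- expanding the $4\times4$ determinant with the $\rho_\theta$-conjugated Weingarten map and verifying that it equals $8\delta^2 D_j/(\rho_1\rho_2 r_1 r_2)$, including the $\cos 2\theta$ term with coefficient $-(\rho_1-\rho_2)(r_1-r_2)$ --- is only announced, not carried out. Since that identity is the entire content of the corollary, the argument is incomplete as written. Once you redo the algebra with the corrected sign, a useful consistency check is the two-sphere case: there the $4\times4$ problem decouples into two copies of the $2\times2$ one, $D_j$ reduces to $2(\delta+r+\rho)^2$, and you recover the displayed two-sphere asymptotics at the end of Section \ref{Section:4}.
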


Let $\Lsp(M)$ be the length spectrum of $M$ and it is given by
$$
\Lsp(M)=\{0\} \cup \{ t\in \R\, \big|\, \text{$|t|$ is the length of a closed broken geodesic in $M$} \}.
$$
 Note that this is the set stated in Theorem \ref{length spectrum thm}, which differs from the minimal length spectrum $\mathcal{L}(M)$ defined in the proof of Theorem \ref{length spectrum thm}. Recall that equation \eqref{wrel eqn} says
\begin{equation*}
\label{xirel1}
w_{\rel}(t)=w_{M}(t)-\sum_{j=1}^Nw_{M_j}(t).
\end{equation*}
On the other hand, the singularity of $w_{M}$ and $w_{M_j}$ can be analysed as in the Theorem \ref{length spectrum thm}. However, Theorem \ref{length spectrum thm} requires $\Omega_j$ to be strictly convex and it implies that
\begin{equation}
\label{xirel2}
\Sing(w_{M})\subset \Lsp(M) \qquad \text{and} \qquad \Sing(w_{M_j})\subset \Lsp(M_j).
\end{equation}

From the equation \eqref{wrel eqn} and relationship \eqref{xirel2}, one may think that there would be some cancellation of singularities and naively conjecture that $\Sing(w_\rel)\subset \Lsp(M)$.
This is not true in general. Therefore, we conclude the following remark.
\begin{rem}
In general, the singularities of $w_\rel$ are contained in $\mathcal{L}(M)\cup\mathcal{L}(M_1)\cup\cdots\cup\mathcal{L}(M_N)$. That is
$$
\Sing(w_\rel)\subset \mathcal{L}(M)\cup \mathcal{L}(M_1)\cup\cdots\cup\mathcal{L}(M_N).
$$
When $\Omega_j$'s are strictly convex, we have $\mathcal{L}(M_j)=\Lsp(M_j)=\{0\}$ for all $j$ and $\mathcal{L}(M)=\Lsp(M)$. In this case, Theorem \ref{smoothzerothm} tells us that
$$
\Sing(w_\rel)\subset \Lsp(M)\backslash\{0\}.
$$
\end{rem}

\

\textit{A natural question to ask is when
\begin{equation*}
\Lsp(M)\backslash\left(\Lsp(M_1)\cup\cdots\cup\Lsp(M_N)\right) \subset \Sing(w_\rel) .
\end{equation*}
}

\section{The function $\Xi$} \label{Section:4}

It was shown in  \cite{RTF} that 
$$
 R_\rel (\lambda) = (\Delta_X - \lambda^2)^{-1} - (\Delta_0 - \lambda^2)^{-1} - \sum_{j=1}^N \left(  (\Delta_{X_j} - \lambda^2)^{-1} - (\Delta_0 - \lambda^2)^{-1} \right)
$$
is a trace-class operator for any $\lambda$ in the upper half space. The function $\Xi$ is then uniquely determined by its decay along the positive imaginary axis and by
$$
 \Xi'(\lambda) = -2\lambda \tr\left( R_\rel(\lambda) \right). 
$$
The resolvent $(\Delta_X - \lambda^2)^{-1}$ can be represented as
$$
-2\rmi \lambda (\Delta_X - \lambda^2)^{-1} = \int_\R e^{\rmi \lambda | t |} \cos( t \Delta_X^\frac{1}{2}) \der t.
$$
Taking differences and the pointwise traces this implies
\begin{equation}
\label{Xiprime eqn}
 \rmi  \Xi'(\lambda) = -2\rmi \lambda \tr \left( R_\rel (\lambda) \right) =  \int_\R e^{\rmi \lambda | t |} w_\rel(t) \der t\\
\end{equation}
where the integral needs to be understood as a distributional pairing. The right hand side is well defined since $w_\rel$ is a tempered distribution supported away from $(-2 \delta,2\delta)$. From Theorem \ref{smoothzerothm}, we then obtain the following theorem.
\begin{theorem}
	\label{CapitalXi thm}
Let $\Omega$ be strictly locally strictly convex near $\mathcal{B}_{\partial \Omega,\delta}$ and let $\alpha>0$. Then for all $\lambda \in \C$ with
$\Im(\lambda) > \alpha| \Re(\lambda) |$ we have the bounds
\begin{equation*}
\left\{
\begin{aligned}
&\Xi'(\lambda)=-\sum_j\frac{2\rmi \delta}{|\det(I-P_{\gamma_j})|^{\frac{1}{2}}}e^{2\rmi \delta \lambda}+o( e^{-2 \delta \Im{\lambda}} )
\\
&\Xi(\lambda)=-\sum_j\frac{1}{|\det(I-P_{\gamma_j})|^{\frac{1}{2}}}e^{2\rmi \delta \lambda}+o(e^{- 2 \delta \Im{\lambda}})
\end{aligned}
\right.
\end{equation*}
where $\gamma_j$ are the shortest bouncing ball orbits.
\end{theorem}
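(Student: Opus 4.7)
The starting point is equation \eqref{Xiprime eqn}, which I read as the distributional pairing $\rmi\Xi'(\lambda) = \langle w_\rel, e^{\rmi\lambda|\cdot|}\rangle$; this is well defined for $\lambda$ in the sector because $w_\rel$ is a tempered distribution vanishing near $0$, while $e^{\rmi\lambda|t|}$ is smooth away from $0$ and decays exponentially in $|t|$. Combining $w_\rel(t) = -\Im(t\hat\xi_\rel(t))$ with $\Im(t-2\delta+\rmi 0)^{-1} = -\pi\delta(t-2\delta)$, Theorem \ref{smoothzerothm} tells me that $w_\rel$ has an isolated delta-function singularity at $t = 2\delta$ with coefficient $c = \sum_j \delta/|\det(I-P_{\gamma_j})|^{1/2}$, and an identical one at $t = -2\delta$ by evenness. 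Since all other singularities of $w_\rel$ lie at strictly larger values of $|t|$, I can pick $\eta>0$ so that $U_\eta := \{\,t : ||t|-2\delta|<\eta\,\}$ contains no singularity other than $\pm 2\delta$.

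My plan is to decompose $w_\rel = w^{(s)} + w^{(r)}$, where $w^{(s)} = c(\delta(t-2\delta)+\delta(t+2\delta))$ and $w^{(r)}$ is a tempered distribution that vanishes on $(-2\delta,2\delta)$ and is locally $L^1$ on $U_\eta$. Pairing $w^{(s)}$ with $e^{\rmi\lambda|\cdot|}$ immediately produces $2c\,e^{2\rmi\delta\lambda}$ and thereby the announced leading term $-\sum_j \frac{2\rmi\delta}{|\det(I-P_{\gamma_j})|^{1/2}} e^{2\rmi\delta\lambda}$ for $\Xi'$. The task then reduces to showing $\langle w^{(r)}, e^{\rmi\lambda|\cdot|}\rangle = o(e^{-2\delta\Im\lambda})$ uniformly in the sector.

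To handle the remainder I would choose $\chi \in C^\infty_0(U_\eta)$ with $\chi\equiv 1$ on $U_{\eta/2}$ and split $w^{(r)} = \chi w^{(r)} + (1-\chi)w^{(r)}$. The tail $(1-\chi)w^{(r)}$ is a tempered distribution of some finite order $k$ supported in $\{|t|\ge 2\delta+\eta/2\}$, so a standard seminorm estimate together with $|\partial^\alpha_t e^{\rmi\lambda|t|}| \lesssim (1+|\lambda|)^{|\alpha|} e^{-|t|\Im\lambda}$ gives a bound $\lesssim (1+|\lambda|)^k e^{-(2\delta+\eta/2)\Im\lambda}$, which is $o(e^{-2\delta\Im\lambda})$ in the sector since $|\lambda|$ is polynomially comparable to $\Im\lambda$ there and the extra factor $e^{-(\eta/2)\Im\lambda}$ absorbs any polynomial. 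The cut-off piece $\chi w^{(r)}$ is compactly supported in $L^1(\R)$; after substituting $t = \pm 2\delta + s$ and using that $w^{(r)}$ vanishes for $|t|<2\delta$, its pairing with $e^{\rmi\lambda|\cdot|}$ reduces to $e^{2\rmi\delta\lambda}$ times $\int_0^\eta R_+(s) e^{\rmi\lambda s}\,ds + \int_0^\eta R_-(s) e^{-\rmi\lambda s}\,ds$ with $R_\pm \in L^1([0,\eta])$. Each such integral is bounded in modulus by $\int_0^\eta |R_\pm(s)| e^{-s\Im\lambda}\,ds$, which tends to $0$ as $\Im\lambda\to\infty$ by dominated convergence (pointwise convergence to $0$ for $s>0$, uniform domination by $|R_\pm|$), and the bound is uniform in $\Re\lambda$. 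This completes the expansion for $\Xi'$.

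The expansion for $\Xi$ itself I would then obtain by integrating along the vertical ray: using $\lim_{s\to\infty}\Xi(\lambda+\rmi s) = 0$ from \eqref{roughbounds}, one has $\Xi(\lambda) = -\rmi\int_0^\infty \Xi'(\lambda+\rmi s)\,ds$, and inserting the expansion of $\Xi'$ together with $\int_0^\infty e^{2\rmi\delta(\lambda+\rmi s)}\,ds = e^{2\rmi\delta\lambda}/(2\delta)$ converts the principal term of $\Xi'$ into $-\sum_j |\det(I-P_{\gamma_j})|^{-1/2}e^{2\rmi\delta\lambda}$; the uniformity of the little-$o$ bound in the sector carries through the integration. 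The main technical obstacle is the $L^1$ piece: the naive triangle inequality yields only $O(e^{-2\delta\Im\lambda})$, and the upgrade to $o(e^{-2\delta\Im\lambda})$ hinges on the approximate-identity behaviour of $e^{-s\Im\lambda}$ near $s=0$ as $\Im\lambda\to\infty$, which in turn requires the local $L^1$ regularity of $w^{(r)}$ near $\pm 2\delta$ guaranteed by Theorem \ref{smoothzerothm}.
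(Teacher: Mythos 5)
Your proposal is correct and follows essentially the same route as the paper: both start from \eqref{Xiprime eqn}, read off the leading delta-function singularity of $w_\rel$ at $t=\pm 2\delta$ from Theorem \ref{smoothzerothm}, and treat everything else as a remainder; you merely supply the decomposition and estimates (a compactly supported $L^1$ piece near $\pm 2\delta$ handled by dominated convergence, plus a tail with strictly better exponential decay) that the paper leaves implicit, and you recover $\Xi$ by integrating $\Xi'$ along vertical rays rather than from the paper's direct Fourier--Laplace representation of $-4\pi\theta\,\Im\hat\xi_\rel$. The only blemish is the parametrization of the $t<0$ contribution, where the phase should again be $e^{\rmi\lambda s}$ with $s=|t|-2\delta\ge 0$ (your subsequent modulus bound $e^{-s\Im\lambda}$ shows this is what you intended), since $e^{-\rmi\lambda s}$ would grow in the upper half-plane.
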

\begin{proof}
From equation \eqref{Xiprime eqn}, we have
\begin{multline*}
\Xi'(\lambda)=-\frac{1}{2\pi}\int_0^{\infty} e^{\rmi \lambda t} \left[2\pi \rmi t \Im \left(\hat{\xi}_\rel(t)-\hat{\xi}_\rel(-t)\right)\right]\der t
\\
=\frac{1}{2\pi}\int_{-\infty}^{\infty} e^{\rmi \lambda t} \left(-4\pi \rmi\, t \theta (t) \Im \hat{\xi}_\rel (t)\right)\der t
\end{multline*}
and
\begin{equation*}
\Xi(\lambda)=\frac{1}{2\pi}\int_{-\infty}^{\infty} e^{\rmi \lambda t} \left(-4\pi  \theta \Im \hat{\xi}_\rel\right)(t)\der t,
\end{equation*}
where again the integrals are distributional pairings.
Using Theorem \ref{smoothzerothm}, our expression of $\Xi(\lambda)$ follows.
\end{proof}
Combining with Corollary \ref{2d sing} and Corollary \ref{3d sing} with Theorem \ref{CapitalXi thm}, we obtain the following corollaries.
\begin{corollary}
For two-strictly-convex-obstacle-scattering problems and $d=2$, one has 
\begin{equation*}
	\label{2ballscattering eqn}
\Xi(\lambda)=-\frac{1}{2}\left(\frac{r \rho}{\delta(r+\rho+\delta)}\right)^{\frac{1}{2}}e^{2\rmi \delta \lambda}+o(e^{- 2 \delta \Im{\lambda}}),
\end{equation*}
where $r$ and $\rho$ are the principal radii of curvature of $\Omega_{1}$ and $\Omega_{2}$ respectively at the reflection points.
\end{corollary}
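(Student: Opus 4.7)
The plan is to read the corollary as pure bookkeeping: Theorem \ref{CapitalXi thm} already expresses $\Xi(\lambda)$ as a sum of exponentials weighted by $|\det(I-P_{\gamma_j})|^{-1/2}$ over the bouncing ball orbits of length $2\delta$, and Corollary \ref{2d sing} supplies the two-dimensional formula for this determinantal factor in terms of the radii of curvature. All that remains is (i) to identify the set of orbits in the two-obstacle setting and (ii) to translate between the two normalisations.

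First I would observe that in the case $N=2$ with both $\Omega_1$ and $\Omega_2$ strictly convex and compact, the set $\mathcal{B}_{\partial\Omega,\delta}$ consists of a single pair of points $(q_1,q_2)\in\partial\Omega_1\times\partial\Omega_2$: the function $\mathrm{dist}^2$ restricted to $\partial\Omega_1\times\partial\Omega_2$ is strictly convex near any minimiser by strict convexity of the obstacles, so the minimum is unique. Hence by Proposition \ref{bouncing ball orbit} there is exactly one bouncing ball orbit $\gamma$ of length $2\delta$ between the two bodies, and the sum in Theorem \ref{CapitalXi thm} reduces to a single term.

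Next I would match the two expressions for the leading singularity of $t\hat\xi_\rel(t)$ at $t=2\delta$. Theorem \ref{smoothzerothm} gives the residue
$$\frac{\delta}{\pi\,|\det(I-P_{\gamma})|^{\frac{1}{2}}},$$
while Corollary \ref{2d sing} gives it as
$$\frac{1}{2\pi}\left(\frac{\delta\, r\, \rho}{r+\rho+\delta}\right)^{\frac{1}{2}}.$$
Equating these two and solving yields
$$\frac{1}{|\det(I-P_{\gamma})|^{\frac{1}{2}}}=\frac{1}{2}\left(\frac{r\rho}{\delta(r+\rho+\delta)}\right)^{\frac{1}{2}}.$$
Inserting this value into the expression for $\Xi(\lambda)$ provided by Theorem \ref{CapitalXi thm} gives exactly the claimed asymptotic, with remainder $o(e^{-2\delta\,\Im\lambda})$ uniform in the sector $\Im(\lambda)>\alpha|\Re(\lambda)|$.

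There is essentially no obstacle here beyond checking the uniqueness of the minimising pair and the consistency of the normalisations; the substantive work has already been performed in Theorem \ref{CapitalXi thm} (which passes from the wave-trace singularity at $t=2\delta$ to the asymptotic of $\Xi$ on a conical neighbourhood of the imaginary axis) and in Corollary \ref{2d sing} (which computes the Poincaré-map determinant via the Birkhoff billiard formalism). The only mildly delicate point is verifying that the error term survives the combination: this is immediate because Theorem \ref{CapitalXi thm} already absorbs any other contribution to $w_\rel$ on $(2\delta,2\delta+\eta)$ into $o(e^{-2\delta\,\Im\lambda})$, and the further obstacle lengths in $\Lsp(M)$ are strictly greater than $2\delta$.
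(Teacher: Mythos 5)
Your proposal is correct and follows the paper's own route: the paper obtains this corollary precisely by combining Theorem \ref{CapitalXi thm} with the Poincar\'e-map computation of Corollary \ref{2d sing}, which is exactly the residue-matching you perform. Your added remark on uniqueness of the closest pair (reducing the sum to a single orbit) is a point the paper leaves implicit; note only that local strict convexity of $\mathrm{dist}^2$ near a minimiser gives isolation rather than uniqueness, so the clean argument is via the supporting hyperplane at the unique minimal displacement vector of the convex Minkowski difference.
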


\begin{corollary}
For two-strictly-convex-obstacle-scattering problems and $d=3$, we have 
\begin{equation*}
	\label{3ballscattering eqn}
\Xi(\lambda)=-\frac{1}{2}\sqrt{\frac{\rho _1 \rho _2 r_1 r_2}{2  D \delta^2}} e^{2\rmi \delta \lambda}+o(e^{- 2 \delta \Im{\lambda}}),
\end{equation*}
where $r_1,r_2$ and $\rho_1,\rho_2$ are the principal radii of curvature at the reflection points and $D$ is given by \eqref{Dref}.
\end{corollary}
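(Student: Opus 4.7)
The plan is to deduce the statement by direct combination of Theorem \ref{CapitalXi thm} with Corollary \ref{3d sing}, which together already encode all the geometric information needed. In the setting of two strictly convex obstacles in $\R^3$, Proposition \ref{bouncing ball orbit} together with local strict convexity near $\mathcal{B}_{\partial\Omega,\delta}$ ensures that the shortest bouncing ball orbits of length $2\delta$ between $\Omega_1$ and $\Omega_2$ form a finite family of isolated non-degenerate periodic billiard trajectories; in the generic case (and as reflected in the corollary's notation) this family reduces to a single orbit $\gamma$, and the argument below applies with a trivial finite sum in the non-generic case.

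The central intermediate step is to relate the abstract Poincar\'e factor $|\det(I-P_\gamma)|^{1/2}$ appearing in Theorem \ref{CapitalXi thm} to the explicit geometric quantity $c_\gamma=\sqrt{\rho_1\rho_2 r_1 r_2/(2D)}$ from Corollary \ref{3d sing}. I would achieve this by comparing the two expressions already derived in the paper for the leading singularity of $t\hat\xi_{\rel}(t)$ at $t=2\delta$: Theorem \ref{smoothzerothm} writes it as $\frac{\delta}{\pi|\det(I-P_\gamma)|^{1/2}}(t-2\delta+\rmi 0)^{-1}$ modulo $L^1_{\loc}$, while Corollary \ref{3d sing} writes the same singularity as $\frac{c_\gamma}{2\pi}(t-2\delta+\rmi 0)^{-1}$. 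Matching residues yields the identification $|\det(I-P_\gamma)|^{1/2}=2\delta/c_\gamma$.

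With this identification the conclusion is a one-line substitution into the $\Xi$-asymptotics of Theorem \ref{CapitalXi thm}:
\begin{equation*}
\Xi(\lambda)=-\frac{1}{|\det(I-P_\gamma)|^{1/2}}e^{2\rmi\delta\lambda}+o(e^{-2\delta\Im\lambda})=-\frac{c_\gamma}{2\delta}e^{2\rmi\delta\lambda}+o(e^{-2\delta\Im\lambda}),
\end{equation*}
followed by the elementary rewriting $\frac{1}{2\delta}\sqrt{\rho_1\rho_2 r_1 r_2/(2D)}=\frac{1}{2}\sqrt{\rho_1\rho_2 r_1 r_2/(2D\delta^2)}$, which produces the claimed formula. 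No further geometric work is required, since the nontrivial Poincar\'e-map computation for a two-link bouncing ball orbit in $\R^3$ has already been packaged into Corollary \ref{3d sing}. The only substantive difficulty, had one chosen to perform that computation from scratch rather than invoke the corollary, would be the linear-algebraic bookkeeping that produces the mixed term $(\rho_1-\rho_2)(r_1-r_2)\cos(2\theta)$ inside $D$ when the two principal-curvature frames at the reflection points are rotated relative to one another.
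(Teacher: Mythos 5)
Your proposal is correct and follows essentially the same route as the paper, which obtains this corollary by combining Theorem \ref{CapitalXi thm} with Corollary \ref{3d sing}; your explicit matching of the singularity coefficients from Theorem \ref{smoothzerothm} and Corollary \ref{3d sing} to get $|\det(I-P_\gamma)|^{1/2}=2\delta/c_\gamma$ is exactly the (implicit) computation needed, and the final algebra checks out. The only minor remark is that for two strictly convex obstacles the minimising bouncing ball orbit is in fact always unique, not merely generically so.
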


In the case of two spheres of radius $r$ and $\rho$ we obtain as a special case
\begin{equation*}
	\label{3ballscattering eqn sphere}
\Xi(\lambda)=- \frac{r \rho}{4 \delta (r + \rho + \delta)} e^{2\rmi \delta \lambda}+o(e^{- 2 \delta \Im{\lambda}}).
\end{equation*}

\section*{Acknowledgements}
We are grateful to the anonymous referee for comments that improved the presentation of the paper.

\end{document}